\documentclass{article}

\usepackage{maa-monthly}
\usepackage[utf8]{inputenc}
\usepackage[T1]{fontenc}
\usepackage{amssymb}
\usepackage[inline]{enumitem}
\usepackage{comment}
\PassOptionsToPackage{hyphens}{url}\usepackage{hyperref}
\usepackage{mathrsfs}
\usepackage{stmaryrd}
\usepackage{tikz-cd}
\usetikzlibrary{arrows}
\usepackage{xcolor}

\definecolor{blue-url}{RGB}{0,0,100}
\definecolor{red-url}{RGB}{100,0,0}
\definecolor{green-url}{RGB}{0,100,0}
\definecolor{light-yellow}{RGB}{255,255,128}
\definecolor{light-blue}{RGB}{193,255,255}
\definecolor{light-red}{RGB}{239,83,80}

\hypersetup{
	pdftitle={On power monoids and their arithmetic},
	pdfauthor={Salvatore Tringali},
	pdfmenubar=false,
	pdffitwindow=true,
	pdfstartview=FitH,
	colorlinks=true,
	linkcolor=blue-url,
	citecolor=green-url,
	urlcolor=red-url
}

\newtheorem{theorem}{Theorem}[section]
\newtheorem{proposition}[theorem]{Proposition}
\newtheorem{corollary}[theorem]{Corollary}
\newtheorem{lemma}[theorem]{Lemma}

\theoremstyle{definition}

\newtheorem{conjecture}[theorem]{Conjecture}
\newtheorem{question}[theorem]{Question}
\newtheorem{questions}[theorem]{Questions}

\providecommand\llb{\llbracket}
\providecommand\rrb{\rrbracket}

\newcommand{\evid}[1]{\textsc{#1}}
\newcommand{\fin}{\mathrm{fin}}
\newcommand{\aut}{\mathrm{Aut}}

\newcommand{\sgrpcong}{ \cong}

\begin{document}

\title{Power monoids and their arithmetic: a survey}
\markright{Power monoids and their arithmetic}
\author{Salvatore Tringali}

\maketitle

\begin{abstract}
The non-empty finite subsets of a multiplicatively written monoid form a monoid under setwise multiplication. The same holds for finite subsets containing the identity element. Partly due to their unusual arithmetic properties, these structures, generically known as power monoids, have attracted increasing attention in recent years, stimulating new perspectives in the study of factorizations in non-cancellative or non-commutative settings. We survey these developments and briefly review some related aspects.
\end{abstract}

\vspace{10px}

\noindent{\textbf{Keywords: }{power monoids, factorizations, arithmetic properties of semigroups.}} 

\vspace{10px}

\noindent

\section{Introduction.}
\label{sec:01}

Let $S$ be a multiplicatively written semigroup (see Section~\ref{sect:generalities} for notation and terminology). Endowed with the binary operation of setwise multiplication induced by $S$ on its power set and defined by
\[
XY := \{xy : x \in X, \, y \in Y\}, \qquad\text{for all } X, Y \subseteq S,
\]
the \textit{non-empty} subsets of $S$ themselves form a semigroup, denoted by $\mathcal P(S)$. Furthermore, the family of all non-empty \textit{finite} subsets of $S$ forms 
a subsemigroup of $\mathcal P(S)$, denoted by $\mathcal P_\fin(S)$. Obviously, $\mathcal P(S) = \mathcal P_\fin(S)$ if and only if $S$ is finite. We call $\mathcal P(S)$ the \evid{large power semigroup} and $\mathcal P_\fin(S)$ the \evid{finitary power semigroup} of $S$, and refer to either structure generically as a \evid{power semigroup}.

If the semigroup $S$ is written additively, we adopt the same convention for any subsemigroup of $\mathcal P(S)$. This amounts, in practice, to the fact that the operation
on $\mathcal P(S)$ maps a pair $(X,Y)$ of non-empty subsets of $S$ to their
\evid{sumset}
\[
X+Y := \{x+y : x \in X,\; y \in Y\},
\]
as opposed to the setwise product of $X$ by $Y$ used in the multiplicative setting.

Power semigroups provide a natural algebraic framework for a variety of key problems in additive number theory and related areas \cite{Bie-Ger-22, Tri-Yan-23(a), Tri-Yan2025(b), GarSan-Tri-24(a)}, including S\'ark\"ozy's conjecture on the ``additive irreducibility'' of the set of [non-zero] squares of a finite field of prime order \cite[Conjecture 1.6]{Sark2012} and Ostmann's conjecture \cite[p.~13]{Ostm1968} on the ``asymptotic additive irreducibility'' of the set of (positive rational) primes.

The first explicit appearance of power semigroups in the literature dates back to at least a 1950 paper by Ballieu~\cite{Bal1950}. However, their systematic investigation only took off in the late 1960s, continuing intensively through the 1970s--1990s. During this period, research focused primarily on the study of 
\begin{itemize}
\item the lattice of sub[semi]groups of $\mathcal P(G)$ or $\mathcal P_\fin(G)$ when $G$ is a group \cite{McC-Hay-1973, Put-1979, Ped-Siz-1980}; 
\item the automorphisms of the finitary power semigroup of a group \cite{Byr-Llo-Ped-Ste-1977, Byr-Llo-Men-Tel-1978, Byr-Llo-Ste-1982, Byr-Llo-Ped-Ste-1984} (these and related papers appear to have gone
largely unnoticed until April 2026, partly because they use
the term ``semigroups of complexes'' instead of ``power semigroups'');
\item the varieties generated by $\mathcal{P}(S)$ as $S$ ranges over certain families of \textit{finite} semigroups \cite{Pin-1980, Mar-1981, Pin-1995, Alm02}, where a \evid{variety} is a class of semigroups closed under homomorphic images, subsemigroups, and finite direct products. 
\end{itemize}
In particular, these latter developments were largely driven by the role of power semigroups in the theory of formal languages and automata.

Power semigroups have also been central to the ongoing development of an arithmetic theory of semigroups and rings, aiming to extend the classical theory of factorizations~\cite{Ger-Hal-06} beyond its traditional boundaries. Such aspects of the topic form the core of the present survey and will be discussed in much greater detail in Sections~\ref{sect:04} and~\ref{sec:future}. In the remainder of this introduction, we will instead briefly review other directions of research that have contributed to the revival of the subject in the past decade.

Let us just mention here that power semigroups are ``strongly non-cancellative structures'' 
(for instance, if $G$ is a group, then $XG = GX = G$ for every non-empty $X \subseteq \allowbreak G$), and the classical theory of factorizations is ill-suited to handle 
such situations. For this reason, in Section~\ref{sec:3_new_perspectives} we review the basics of an ``extended 
theory of factorizations'', which provides tools to circumvent, or at least mitigate, the side effects arising from these and other obstacles inherent in the classical approach.

\subsection{Isomorphism problems.} A turning point in the history of power semigroups was marked by a paper by Tamura and Shafer \cite{Tam-Sha1967}, eventually leading to the following questions. Here and throughout, the symbol $\sgrpcong$ denotes isomorphism, and every morphism between semigroups is understood to be a semigroup homomorphism.

\begin{questions}
\label{ques:tamura-shafer-iso-problem}
Prove or disprove that, for all $H$ and $K$ in a given class $\mathscr C$ of semigroups, one has
\begin{enumerate}[label=\textup{(\arabic{*})}]
\item\label{ques:tamura-shafer-iso-problem(1)} $\mathcal P(H) \sgrpcong \mathcal P(K)$ if and only if $H \sgrpcong K$.
\item\label{ques:tamura-shafer-iso-problem(2)} $\mathcal P_\fin(H) \sgrpcong \mathcal P_\fin(K)$ if and only if $H \sgrpcong K$.
\end{enumerate}
\end{questions}
The crux of these questions lies in the “only if” direction. Indeed, if $f \colon H \to K$ is a semigroup isomorphism, then the mapping
\[
f^\ast \colon \mathcal P(H) \to \mathcal P(K) \colon X \mapsto f[X], 
\]
where 
\begin{equation}\label{equ:augmentation}
f[X] := \{f(x) \colon x \in X\},
\end{equation}
is itself an isomorphism, called the \evid{augmentation} of $f$. Moreover, the restriction of $f^\ast$ to the non-empty finite subsets of $H$
yields an isomorphism from $\mathcal P_{\fin}(H)$ onto
$\mathcal P_{\fin}(K)$, since clearly $f^\ast(X) \in \mathcal P_\fin(K)$ for all $X \in \mathcal P_\fin(H)$. See \cite[Remark 4]{Tri-2024(a)} and \cite[Section~1]{GarSan-Tri-24(a)} for further details and context.

The answer to Question \ref{ques:tamura-shafer-iso-problem}\ref{ques:tamura-shafer-iso-problem(1)} is negative for the class of all semigroups \cite{Mog1973}; is positive for groups \cite{Shaf-1967}, semilattices \cite[p.~218]{Koba-1984},  Clifford semigroups \cite[Theorem 4.7]{Gan-Zhao-2014}, cancellative commutative semigroups \cite[Corollary 1]{Tri-2024(a)}, etc.;
and is open for finite semigroups \cite[p.~5]{Hami-Nord-2009}, despite some authors having claimed otherwise based on unproven results announced in \cite{Tamu-1987}. For a thorough discussion on this point, see
\begin{center}
\url{https://mathoverflow.net/questions/508548/}
\end{center}

As for Question \ref{ques:tamura-shafer-iso-problem}\ref{ques:tamura-shafer-iso-problem(2)}, very little is known outside the case where $\mathscr C$ is a class of \textit{finite} semigroups contained in any of the classes already covered by the ``positive results'' reviewed in the previous paragraph. 

More precisely, Bienvenu and Geroldinger have shown in \cite[Theorem 3.2(3)]{Bie-Ger-22} that the question has an affirmative answer for numerical monoids, and the conclusion has been generalized to cancellative commutative semigroups in \cite[Corollary 1]{Tri-2024(a)}. Here, a \evid{numerical semigroup} is a cofinite subsemigroup of the non-negative integers under addition, and a \evid{numerical monoid} is a numerical semigroup containing $0$.

\subsection{Power monoids.} Investigations in the past few years have addressed some variants of
Questions~\ref{ques:tamura-shafer-iso-problem}, inspired by the work of Bienvenu
and Geroldinger in \cite{Bie-Ger-22}.

Specifically, let $M$ be a multiplicatively written monoid, with $1_M$ denoting its identity and $M^\times$ its unit group.  $\mathcal P(M)$ itself is then a monoid, and $\mathcal P_\fin(M)$ is a submonoid of $\mathcal P(M)$, their identity being the singleton $\{1_M\}$.
In addition, each of the families
\[
\mathcal P_{\fin,\times}(M) := \{X \in \mathcal P_\fin(M) : 
X \cap M^\times \ne \emptyset\}
\]
and
\[
\mathcal P_{\fin,1}(M) := \{X \in \mathcal P_\fin(M) : 
1_M \in X\} \subseteq \mathcal P_{\fin,\times}(M),
\]
first considered by Fan and Tringali in \cite{Fa-Tr18}, is a submonoid of 
$\mathcal P_\fin(M)$.  
Accordingly, $\mathcal P_\fin(M)$ is called the \evid{finitary power monoid},
$\mathcal P_{\fin,\times}(M)$ the \evid{restricted finitary power monoid}, and
$\mathcal P_{\fin,1}(M)$ the \evid{reduced finitary power monoid} of $M$. We will refer generically to any of these structures as a  \evid{power
monoid}, and write $\mathcal P_{\fin,0}(M)$ instead of
$\mathcal P_{\fin,1}(M)$ when $M$ is written additively.

Recent work on power monoids has mainly focused on $\mathcal P_{\fin,1}(M)$. Note in this regard that $\mathcal P_{\fin,1}(M) = \mathcal P_{\fin,\times}(M)$ if and only if the unit group $M^\times$ of $M$ is trivial, while $\mathcal P_{\fin,\times}(M) = \mathcal P_\fin(M)$ if and only if $M = M^\times$ (that is, $M$ is a group). Consequently, we have that $\mathcal P_{\fin,1}(M) = \mathcal P_\fin(M)$ if and only if $M$ is trivial.

Most notably, Tringali and Yan \cite{Tri-Yan-23(a)} posed the following
analogue of Questions~\ref{ques:tamura-shafer-iso-problem} for $\mathcal P_{\fin,1}(\cdot)$, where, as before,
the symbol $\sgrpcong$ denotes a \textit{semigroup} isomorphism.

\begin{question}\label{ques:BG-like-for-monoids}
Prove or disprove that, for all $H$ and $K$ in a given class of monoids, one has $\mathcal P_{\fin,1}(H) \sgrpcong \mathcal P_{\fin,1}(K)$ if and only if $H \sgrpcong K$.
\end{question}

Once again, the core of Question \ref{ques:BG-like-for-monoids} lies in the ``only if\,'' direction. Indeed, if $f$ is a (semigroup) iso\-mor\-phism from a monoid $H$ to a monoid $K$, then $f$ maps the identity of $H$ to the identity of $K$ (see, for instance, the last lines of \cite[Section~2]{Tri-2024(a)}). Since $f[X]$ is finite for every non-empty finite $X \subseteq H$, it is thus clear that the augmentation of $f$, as defined by Eq.~\eqref{equ:augmentation}, restricts to an isomorphism from $\mathcal P_{\fin,1}(H)$ to $\mathcal P_{\fin,1}(K)$.  

With that said, it was shown in \cite{Tri-Yan-23(a)} that
Question~\ref{ques:BG-like-for-monoids} has a negative answer for left- or right-cancellative monoids (loc.~cit., Examples~1.2), and a
positive answer for (\textsc{rational}) \evid{Puiseux monoids}
(loc.~cit., Theorem~2.5), that is, submonoids of the non-negative rational
numbers under addition. The latter result confirmed a conjecture by Bienvenu and Geroldinger \cite[Conjecture~4.7]{Bie-Ger-22}.

Rago \cite{Rago26} subsequently proved that
Question~\ref{ques:BG-like-for-monoids} has a negative answer within the
class of cancellative commutative monoids. More precisely, he showed that
if $H$ and $K$ are commutative valuation monoids with trivial unit groups
and isomorphic quotient groups, then $\mathcal P_{\fin,1}(H)$ is isomorphic
to $\mathcal P_{\fin,1}(K)$. More recently, he characterized all pairs
$(H,K)$ of non-isomorphic cancellative commutative monoids such that
$\mathcal P_{\fin,1}(H)$ and $\mathcal P_{\fin,1}(K)$ are isomorphic \cite[Theorem 18]{Rago26(c)}. In
particular, he obtained that if $H$ and $K$ have non-trivial unit groups and
$\mathcal P_{\fin,1}(H)$ is isomorphic to $\mathcal P_{\fin,1}(K)$, then
$H$ is isomorphic to $K$ (loc.~cit., Theorem 14).

Meanwhile, Tringali and Yan have established that Question~\ref{ques:BG-like-for-monoids} has an affirmative answer for \textit{torsion} groups \cite{Tri-Yan2026(a)}, and Rago (private communication) has subsequently announced a proof that the same conclusion extends to arbitrary groups.

Finally, one could also consider the companion of Question~\ref{ques:BG-like-for-monoids}, obtained by replacing $\mathcal P_{\fin,1}(\cdot)$ with $\mathcal P_{\fin,\times}(\cdot)$, leading to the following:

\begin{question}\label{ques:BG-like-for-restricted-PMs}
Prove or disprove that, for all $H$ and $K$ in a given class of monoids, one has $\mathcal P_{\fin,\times}(H) \sgrpcong \mathcal P_{\fin,\times}(K)$ if and only if $H \sgrpcong K$.
\end{question}

To date, this has not been studied in the literature, modulo the observation that Questions~\ref{ques:BG-like-for-monoids} and \ref{ques:BG-like-for-restricted-PMs} coincide when attention is restricted to classes of monoids with trivial unit group. As a starter, note that, by Theorem~\ref{thm:divisor-closedness}\ref{thm:divisor-closedness(i)}, the answer is affirmative for groups, since monoid isomorphisms map units bijectively onto units (recall that any \textit{semigroup} isomorphism between monoids is a fortiori a monoid isomorphism, i.e., it maps the identity to the identity).

\subsection{Symmetry and rigidity.} 
The study of automorphisms for a wide range of objects and categories (from topological spaces to fields, from graphs to algebraic varieties, and so on) is a common theme in mathematics and often leads to a deeper understanding of various aspects of the theory. Power semigroups are no exception.

To begin, we gather from the previous subsection that the augmentation of an automorphism $f$ of a semigroup $S$ restricts to an automorphism of the finitary power semigroup $\mathcal P_\fin(S)$. This gives a canonical embedding $\Phi \colon \aut(S) \hookrightarrow \aut(\mathcal P_\fin(S))$, and it is natural to ask whether $\Phi$ is a group isomorphism. Mutatis mutandis, the same question arises for $\aut(\mathcal P_{\fin,1}(H))$ when $H$ is a monoid.

In the aftermath of their proof of the Bienvenu--Geroldinger
conjecture (see the previous subsection for references), Tringali and Yan showed that the automorphism group of
$\mathcal P_{\fin,0}(\mathbb N)$ is cyclic of order two
\cite[Theorem~3.2]{Tri-Yan2025(b)}. The same result had in fact
already been obtained by Byrd et al.~\cite[Theorem~1]{Byr-Llo-Ped-Ste-1984} (via a simpler argument), but their work had gone unnoticed until recently. In addition, it was conjectured in \cite[Section~4]{Tri-Yan2025(b)} that $\aut(\mathcal P_{\fin,0}(H))$ is trivial for every numerical monoid $H$ properly contained in $\mathbb N$; to date, the conjecture is still open.

As a follow-up, Tringali and Wen \cite[Theorem~3.5]{Tri-Wen-2026(a)} showed that the automorphism group of the finitary power monoid of $\mathbb Z$ (the additive group of integers) is isomorphic to the direct product of a cyclic group of order two and the infinite dihedral group, in stark contrast with the fact that the only non-trivial automorphism of $\mathbb Z$ itself is the map $x \mapsto -x$. They also conjectured in \cite[Section~4]{Tri-Wen-2026(a)} that $\aut(\mathcal P_{\fin,0}(\mathbb Z))$ is cyclic of order $2$, and this was confirmed by Wong et al.~in \cite{Wong2026}.

In fact, it had already been established by Byrd et al.~\cite[Theorem~2]{Byr-Llo-Ped-Ste-1984} that the automorphism group of $\mathcal P_\fin(\mathbb Z)$ is a splitting extension of $\mathbb Z$ by the Klein four-group $V_4$; that is, there exists an action $\phi$ of $V_4$ on $\mathbb Z$ such that
$\aut(\mathcal P_\fin(\mathbb Z))$ is isomorphic to the semidirect product $\mathbb Z \rtimes_\phi V_4$. However, since $\aut(\mathbb Z)$ is cyclic of order two, this description does not completely determine the isomorphism type of
$\aut(\mathcal P_{\fin}(\mathbb Z))$, leaving open two
non-isomorphic possibilities for the latter group. An analogous description of $\aut(\mathcal P_\fin(G))$ for every non-trivial additive
subgroup $G$ of the rationals was obtained more generally in \cite[Corollary~4.10]{Byr-Llo-Ste-1982}.

Rago \cite[Theorem~13]{Rago26(b)} subsequently proved that, for a
finite abelian group $G$, the automorphism group of
$\mathcal P_{\fin,1}(G)$ is (canonically) isomorphic to $\aut(G)$,
except when $G \cong V_4$ (the Klein four-group), in which case
$\aut(\mathcal P_{\fin,1}(G))$ is isomorphic to the direct product of
two copies of the symmetric group of degree three
\cite[Example~2]{Rago26(b)}. On a related note, results by Byrd
et al.~\cite[Theorems~2, 3, and~5]{Byr-Llo-Ped-Ste-1977} show that
the automorphism group of $\mathcal P_\fin(G)$, when $G$ is either a
finite cyclic group of order $n \notin \{3,4,5\}$ or a subgroup of
the circle group of order greater than $5$, is (canonically)
isomorphic to $\aut(G)$. The same paper also contains a description
of $\aut(\mathcal P_\fin(G))$ when $G$ is cyclic of order $3$, $4$,
or $5$ (ibid., Sect.~3, p.~31).

Even more recently, Wong et al.~\cite[Theorem~1.3]{Wong2025} have proved that the automorphism group of the finitary power semigroup $\mathcal{P}_{\text{fin}}(S)$ of a numerical semigroup $S$ is trivial unless $S = \llbracket k, \infty \llbracket$ for some integer $k \ge 0$, in which case
$\operatorname{Aut}(\mathcal{P}_{\text{fin}}(S))$ is cyclic of order two. 
This result has been a source of inspiration for~\cite{Tri-Wen-2026(b)}, where it is established that the automorphism groups of $\mathcal{P}(S)$ and $\mathcal{P}_0(H)$ are trivial for all numerical semigroups $S$ and numerical monoids $H$. Here, $\mathcal{P}_0(H)$ is the submonoid of $\mathcal{P}(H)$ 
consisting of all subsets of $H$ containing $0$.

 \section{Formalities.}
\label{sect:generalities}

We denote by $\mathbb N$ the additive monoid of non-negative integers, by $\mathbb N^+$ the set of positive integers, by $\mathbb Z$ the additive group of integers, and by $|X|$ the cardinality of a set $X$.
Unless otherwise stated, we reserve the letters $m$ and $n$ (with or without subscripts) for positive integers.
Given $a, b \in \mathbb N \cup \{\infty\}$, we let $\llb a, b \rrb := \{x \in \allowbreak \mathbb N \colon \allowbreak a \le x \le b\}$ be the (\evid{discrete}) \evid{interval} from $a$ to $b$.

If not explicitly specified, we write all semigroups (and monoids) multiplicatively. An element $a$ in a semigroup $S$ is \evid{cancellative} if $ax \ne ay$ and $xa \ne ya$ for all $x, y \in S$ with $x \ne y$; the semigroup itself is cancellative if each of its elements is.

A \evid{unit} of a monoid $M$ with identity (element) $1_M$ is an element $u \in M$ for which there exists a (provably unique) element $v \in M$, accordingly denoted by $u^{-1}$ and called the \evid{inverse} of $u$ (in $M$), with the property that $uv = vu = 1_M$. We assume that every submonoid of a monoid is \evid{unitary}, meaning that it has the same identity.

The monoid $M$ is \evid{Dedekind-finite} if, for any $x, y \in M$, the equality $xy = 1_M$ implies $yx = 1_M$; \evid{unit-cancellative} if $xy \ne x \ne yx$ for all $x, y \in M$ with $y$ a non-unit; \evid{acyclic} if $uxv \ne x$ for all $u, v, x \in M$ such that $u$ or $v$ is a non-unit; and \evid{torsion-free} if the only finite-order element of $M$ is the identity. Here and throughout, the \evid{order} of an element $x \in M$ is the size of the set $\{x^k : \allowbreak k \in \allowbreak \mathbb N\}$ if this set is finite;
otherwise, the order is $\infty$.

To the best of our knowledge, unit-cancellativity was first studied in the commutative setting by Rosales et al.~\cite[Section~4, p.~72]{Ros-GS} (under a different name), and in the non-commutative setting by Fan and Tringali \cite[Section~2.1]{Fa-Tr18}. Acyclicity, on the other hand, was first introduced in \cite[Definition~4.2]{Tr20(c)}. Commutative or
unit-cancellative monoids are Dedekind-finite \cite[Proposition 4.3(i)]{Tr20(c)}. Moreover, every acyclic monoid is unit-cancellative, and the converse holds in the commutative setting.

Given a set $A$, we denote by $\mathscr F^+(A)$ the \evid{free semigroup over $A$}
and write $\ast_A$ for its operation; when no ambiguity is likely to arise, we drop the subscript $A$
from this notation.
As a set, $\mathscr F^+(A)$ consists of all non-empty finite tuples of
elements of $A$, and $\ast_A$ is the operation of \evid{concatenation}
of such tuples. Adding the empty tuple as the identity element for concatenation yields the \evid{free monoid} $\mathscr F(A)$ over $A$.

We refer to an element of $\mathscr F(A)$ as an \evid{$A$-word}, or simply a \evid{word} when there is no serious risk of confusion. The empty tuple is called the empty $A$-word.
If $\mathfrak u$ is a non-empty $A$-word, its \evid{length} is the unique integer $k \ge 1$ such that
$\mathfrak u$ is a $k$-tuple of elements of $A$. In this case, $
\mathfrak u = u_1 \ast \cdots \ast u_k$
for some uniquely determined $u_1, \ldots, u_k \in A$, called the
\evid{letters} of $\mathfrak u$. The empty $A$-word has length zero.

Further notation and ter\-mi\-nol\-o\-gy, if not explained when first used, are standard or should be clear from the context. In particular, we address the reader to Howie's monograph \cite{Ho95} for basic aspects of semigroup theory.
\section{New perspectives on an old classic.}
\label{sec:3_new_perspectives}

Throughout this section, $M$ denotes a monoid, with no further
assumptions unless otherwise specified.

Roughly speaking, the theory of factorizations is the study of a broad spectrum of phenomena arising from the possibility or impossibility of extending the Fundamental Theorem of Arithmetic from the integers to more abstract settings where a set comes endowed with a binary operation.
The theory has traditionally focused on domains and cancellative monoids,
primarily in the commutative setting
\cite{Ger-Hal-06, Bag-Cha-2011, Ger-2016, Ge-Zh-20a, Got-And-2022, Sme-2026}, and the
building blocks used in factorizations are \evid{atoms}, that is,
non-units that cannot be written as a product of two other non-units.
Only in the past few years have researchers taken the first
steps toward a systematic study of factorizations in
non-cancellative (and non-commutative) settings, using building blocks
that are not necessarily atoms; see \cite[Section~2.4]{An-Tr18} for a
review of some earlier scattered results along these lines. 

Power monoids are among the driving
forces behind these developments: they exhibit a rich arithmetic, which makes them an ideal testing ground for some key aspects of an ``extended theory of factorizations'' whose foundations were laid down by Tringali et al.~in \cite{Fa-Tr18, An-Tr18, Tr20(c), Tr21(b), Co-Tr-21(a), Co-Tr-22(a), Co-Tr-22(b)}. See \cite{Ajr-Got-2023pre, Cas-DAnn-GarSan-2023, Bon-GarSan-2024, GarSan-ONei-2026} for applications to Boolean lattices, monoids of ideals, and semilattices of numerical monoids.

A key feature of these new perspectives is the idea of pairing the
monoid $M$ with a \textsc{preorder} (that is, a reflexive and transitive binary
relation on $M$) and letting the preorder induce on $M$ a
\emph{relative} notion of irreducibility \cite[Section~3]{Tr20(c)}.
In this way, the theory gains tremendous flexibility, as neatly
explained by Cossu in \cite{Cossu}, allowing, for instance, the study
of factorizations in non-trivial contexts such as groups, where the
classical theory is inapplicable because there are no atoms, or in rings
with non-zero zero divisors, where the classical theory loses
significance as its invariants tend to blow up in a more or less
predictable way. 

Another feature of this extended theory, specifically designed
to counter, or at least mitigate, the aforementioned blow-up
phenomena, is the idea of imposing a ``minimality condition'' on
factorizations, an idea first introduced in
\cite[Section~4]{An-Tr18} within the restricted framework of the
classical theory and further developed in
\cite{Co-Tr-22(a), Co-Tr-22(b), Co-Tr-25(a)}.

Below, we briefly review the basics of this generalized approach that are most relevant to the study of power monoids in the subsequent sections.

\subsection{Irreducibles vs atoms.} 

To avoid unnecessary technicalities, we shall not pursue the broader
generality alluded to above, but restrict our attention to the
\evid{divisibility preorder} $\mid_M$, that is, the binary relation on $M$ defined, for all $x, y \in M$, by
\[
x \mid_M y \quad \text{if and only if} \quad
y \in MxM.
\]
If $x \mid_M y$, we say that $x$ is a \evid{divisor} of $y$ (in $M$),
or $x$ divides $y$, or $y$ is divisible by $x$. If $x \mid_M y$ and $y \mid_M x$, we write $x \simeq_M y$. If $x$ does \textit{not} divide $y$, we write $x \nmid_M y$. We call
$x$ a \evid{unit-divisor} if $x$ is a divisor of the identity $1_M$, and a \evid{proper divisor} of $y$ if $x \mid_M y$ but $y \nmid_M x$. For later reference, it is convenient to set
\[
M^\sharp := \{x \in M: x \nmid_M 1_M\}.
\]
A submonoid $N$ of $M$ is \evid{divisor-closed} if every divisor in $M$ 
of an element $a \in N$ is itself an element of $N$; in this case, we may equally say that $N$ is divisor-closed in $M$.  
Note that every unit divides every element of $M$, and $M$ is
Dedekind-finite if and only if $M^\sharp = M \setminus M^\times$ (that is, every unit-divisor is a unit).

Within this framework, a key role is played by the \evid{$\mid_M$-irreducibles}, which we simply call the \evid{irreducibles}, or \evid{irreducible elements}, of $M$. These are the elements $a \in M^\sharp$
that admit no factorization $a = bc$ with
$b$ and $c$ proper divisors of $a$ in $M$ that are not unit-divisors. In particular, it follows from the previous observations that, in a Dedekind-finite monoid, an irreducible is a non-unit that does not factor as a product of two proper divisors.
In addition to irreducibles, we shall occasionally encounter
\evid{quarks}, that is, elements whose only proper divisors divide the identity, yet which are not themselves unit-divisors.

Atoms (in the sense of the classical theory) and quarks are
irreducibles, but the converse need not be true. 
For instance, the zero element $0_R$ of a domain $R$ is irreducible in the multiplicative monoid of $R$, yet it is not an atom, since it is idempotent. Moreover, if $R$ is not a skew field, then $0_R$ is not a quark either,
because every non-unit $x \in R$ divides $0_R$. For a more interesting
example, see Theorem~\ref{thm:Pfin1-is-factorable}.

\begin{proposition}
\label{prop:atoms-are-quarks}
Every atom of a monoid is a quark.
\end{proposition}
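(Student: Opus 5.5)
We argue directly from the definitions. Recall that an \emph{atom} of $M$ is a non-unit $a$ such that, whenever $a = bc$ with $b, c \in M$, at least one of $b$ and $c$ is a unit. A \emph{quark} is an element that is not a unit-divisor and whose proper divisors all divide $1_M$. So, given an atom $a$, we must check two things: that $a \nmid_M 1_M$, and that every proper divisor of $a$ divides $1_M$. No commutativity or cancellativity is assumed, so Dedekind-finiteness is not available, and both steps have to be done by hand.

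\emph{Step 1: $a$ is not a unit-divisor.} Suppose that $xay = 1_M$ for some $x, y \in M$. Then
\[
a = a \cdot 1_M = a(xay) = (ax)(ay),
\]
so, since $a$ is an atom, $ax$ or $ay$ is a unit.
\begin{itemize}
\item If $ax = u$ is a unit, then $a(xu^{-1}) = 1_M$, so $a$ has a right inverse $r := xu^{-1}$.
\item If $ay = v$ is a unit, then $1_M = xay = xv$, so $x = v^{-1}$ is a unit. Hence $a(yx) = vx = vv^{-1} = 1_M$, and again $a$ has a right inverse $r := yx$.
\end{itemize}
In either case $a = a(ra)$. By atomicity, $a$ or $ra$ is a unit; since $a$ is not a unit, $ra$ must be. But $ra$ is idempotent, because $(ra)(ra) = r(ar)a = ra$, and the only idempotent unit is $1_M$. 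Thus $ra = ar = 1_M$, so $a$ is a unit, a contradiction.

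\emph{Step 2: proper divisors of $a$ divide $1_M$.} Let $d$ be a proper divisor of $a$, so $a = udv$ for some $u, v \in M$ and $a \nmid_M d$. Splitting $a = (ud)\,v$, atomicity gives that $ud$ or $v$ is a unit.
\begin{itemize}
\item If $ud$ is a unit, then $d$ divides a unit, hence divides $1_M$.
\item If $v$ is a unit, split instead $a = u\,(dv)$. Atomicity gives that $u$ or $dv$ is a unit.
\begin{itemize}
\item If $dv$ is a unit, then $d \mid_M 1_M$ as before.
\item If $u$ is a unit, then $d = u^{-1} a v^{-1}$, so $a \mid_M d$. This contradicts the assumption that $d$ is a proper divisor of $a$.
\end{itemize}
\end{itemize}
In all admissible cases $d \mid_M 1_M$. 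Combined with Step 1, this shows that $a$ is a quark.

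The main obstacle is Step 1. In a non-Dedekind-finite monoid a non-unit can a priori divide $1_M$, and the argument works only because of the factorization $a = (ax)(ay)$ followed by the idempotent trick $a = a(ra)$. Step 2 is a routine two-stage application of the atom condition.
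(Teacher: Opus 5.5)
Your proof is correct, and Step 2 is essentially the paper's argument. From $a = udv$ you apply the atom property to the splittings $(ud)\,v$ and $u\,(dv)$, and conclude that either $u, v \in M^\times$ (whence $a \mid_M d$) or $d$ is a unit-divisor.

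Where you differ from the paper is in how Dedekind-finiteness is handled. Your remark that it ``is not available'' is inaccurate: any monoid that has an atom is automatically Dedekind-finite. The paper imports exactly this at the outset, citing Lemma~2.2(i) of Fan and Tringali.

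The paper then uses Dedekind-finiteness twice. First, it excludes $ub \in M^\times$ and $bv \in M^\times$, after noting $b \notin M^\times$. Second, it tacitly obtains that the non-unit $a$ is not a unit-divisor, since $M^\sharp = M \setminus M^\times$ in a Dedekind-finite monoid.

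Your Step 2 shows the first use is superfluous: if $ud$ or $dv$ is a unit, then $d \mid_M 1_M$ directly. Your Step 1, via $a = (ax)(ay)$ and the idempotent unit $ra$, replaces the second use with a short local argument.

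The upshot is that your proof is self-contained and makes explicit the clause ``$a$ is not a unit-divisor,'' which the paper's proof leaves implicit. The paper's version is shorter, at the price of relying on the cited lemma.
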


\begin{proof}
Let $a$ be an atom of $M$ (if $M$ has no atoms, the statement is vacuously true), and let $b \in M$ be a divisor of $a$ but not a unit-divisor. By \cite[Lemma 2.2(i)]{Fa-Tr18}, $M$ is Dedekind-finite, and hence $b$ is a non-unit. We need to show that $a \mid_M b$.

Since $b \mid_M a$, there exist $u, v \in M$ such that $a = ubv$; and since $a$ is an atom, this can only happen if each of the sets $\{u,bv\}$ and $\{ub, v\}$ contains a unit. By the Dedekind-finiteness of $M$ and the fact
that $b \notin M^\times$, it follows that both $u$ and $v$ are units. Hence $b = u^{-1} a v^{-1}$, that is, $a$ divides $b$ (as desired).
\end{proof}

For the reader's convenience, we summarize the mutual relationships between
irreducibles, atoms, and quarks in the
following diagram:
\[
\{\text{atoms}\} \subseteq \{\text{quarks}\} \subseteq \{\text{irreducibles}\}.
\]

It should be noted that our distinction between irreducibles and atoms is a delicate point and a
common source of confusion. While in the classical theory the two terms are used interchangeably, the former notion is considerably more flexible than the latter and better suited to the extended
version of the theory that we consider here; see, e.g., \cite[Theorem~3.10]{Tr20(c)},
\cite[Theorem~5.19]{Co-Tr-21(a)}, and \cite[Theorem~5.1]{Co-Tr-22(a)}.

\subsection{Factorizations and lengths.} The monoid $M$ is \evid{factorable} (resp., \evid{atomic}) if every $x \in M^\sharp$ can be written as a product $a_1 \cdots a_m$ of irreducibles (resp., atoms), in which case the word $a_1 \ast \allowbreak \cdots \ast a_m$ (an element of the free semigroup over $M$) is called a \evid{factorization} (resp., an \evid{atomic factorization}) of $x$ (over $M$) and $m$ is called a \evid{length} (resp., an \evid{atomic length}) of $x$. 
We denote by $\mathscr{Z}_M(x)$ the set of factorizations of the element $x$, and by $\mathsf{L}_M(x)$ its set of lengths.

A factorization $\mathfrak a \in \mathscr Z_M(x)$ is \evid{equivalent} to a factorization $\mathfrak b \in \mathscr Z_M(x)$ if the length $m$ of $\mathfrak a$ is equal to the length $n$ of $\mathfrak b$ and there exists a permutation $\sigma$ of the interval $\llb 1, m \rrb$ such that $a_i \simeq_M b_{\sigma(i)}$ for every $i \in \llb 1, m \rrb$, where $a_i$ and $b_{\sigma(i)}$ are the $i^\text{th}$ letter of $\mathfrak a$ and the $\sigma(i)^\text{th}$ letter of $\mathfrak b$. Of course, ``being equivalent'' is an equivalence relation on $\mathscr Z_M(x)$. In addition, $\mathfrak a$ is a \evid{minimal factorization} (of $x$) if there do not exist a factorization $\alpha_1 \ast \cdots \ast \alpha_k \in \mathscr Z_M(x)$ of length $k$ (strictly) smaller than $m$ and an injective map $\tau \colon \llb 1, k \rrb \to \llb 1, m \rrb$ such that $\alpha_i \simeq_M a_{\tau(i)}$ for each $i \in \llb 1, k \rrb$, in which case $m$ is called a \evid{minimal length} of $x$. We denote by $\mathsf L_M^{\sf m}(x)$ the set of minimal lengths and by $\mathscr Z_M^{\sf m}(x)$ the set of minimal factorizations of $x$. It is fairly obvious that $x$ has a factorization if and only if it has a minimal factorization; moreover, 
\[
\mathsf L_M^{\sf m}(x) \subseteq \mathsf L_M(x)
\quad\text{and}\quad
\mathscr Z_M^{\sf m}(x) \subseteq \mathscr Z_M(x).
\]

For convenience, we extend some of the previous definitions, by setting $\mathsf L_M(1_M) = \mathsf L_M^{\sf m}(1_M)  := \{0\} \subseteq \mathbb N$ and
$\mathsf L_M(u) = \mathsf L_M^{\sf m}(u) := \emptyset$ for every non-identity unit-divisor $u \in M$. (Note that a non-empty product of irreducibles cannot be a unit-divisor.)
Accordingly, it is natural to consider the families
\[
\mathscr L(M) := \{\mathsf L_M(y) : y \in M\} \setminus \{\emptyset\}
\]
and
\[
\mathscr L^{\sf m}(M) := \{\mathsf L_M^{\sf m}(y) : y \in M\} \setminus \{\emptyset\}.
\]
These are subfamilies of the power set of $\mathbb N$, and both contain the
singleton $\{0\}$. Moreover, our definitions ensure that if $0 \in \mathsf
L_M(y)$ for some $y \in M$, then $y$ is the identity $1_M$, and hence
$\mathsf L_M(y) = \mathsf L_M^{\sf m}(y) = \{0\}$.

Each $L \in \mathscr L(M)$ is called a \evid{length set} of $M$, and
$\mathscr L(M)$ the \evid{system of lengths} of the monoid. In a similar vein, each $L \in \mathscr L^{\sf m}(M)$ is called a \evid{minimal length set} of $M$, and
$\mathscr L^{\sf m}(M)$ the \evid{system of minimal lengths}.

A fundamental tenet of factorization theory is that, apart from trivial cases, the arithmetic of a monoid $M$ is controlled by the arithmetic of its \textit{proper} divisor-closed submonoids. From \cite[Corollary 3.6]{Co-Tr-22(a)}, we know that if $N$ is a divisor-closed 
submonoid of $M$, then $N$ and $M$ share the same unit-divisors, and every irreducible 
(resp., atom) of $N$ is also an irreducible (resp., atom) of $M$. We are thus led to the following proposition, which is a special case of \cite[Proposition 3.5]{Co-Tr-22(a)}.

\begin{proposition}
\label{prop:divisor-closed-submons}
If $N$ is a divisor-closed submonoid of the monoid $M$, then every [minimal] factorization of an element $x \in N$ is also a [minimal] factorization of $x$ in $M$. In particular, the system of [minimal] lengths of $N$ is contained in that of $M$.
\end{proposition}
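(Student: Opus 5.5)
The plan is to transfer everything through the preceding remark from \cite[Corollary 3.6]{Co-Tr-22(a)}, which says that $N$ and $M$ have the same unit-divisors and that every irreducible of $N$ is an irreducible of $M$. In addition, I will check directly that the relations $\mid_N$ and $\mid_M$, and hence $\simeq_N$ and $\simeq_M$, agree on elements of $N$.

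\textbf{Step 1: the preorders agree on $N$.} If $x,y\in N$ and $x\mid_N y$, then $y\in NxN\subseteq MxM$, so $x\mid_M y$. Conversely, suppose $x \mid_M y$ with $y\in N$, so that $y=uxv$ for some $u,v\in M$. Then $u$, $x$ and $v$ all divide $y$ in $M$, so divisor-closedness puts them in $N$, and therefore $x\mid_N y$. Consequently $\simeq_N$ is the restriction of $\simeq_M$ to $N$. Similarly, $N^\sharp = M^\sharp\cap N$, because the unit-divisors coincide.

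\textbf{Step 2: factorizations transfer.} Let $x\in N$, and let $a_1\ast\cdots\ast a_m\in\mathscr Z_N(x)$. Each $a_i$ is an irreducible of $M$, and $x=a_1\cdots a_m$ holds in $M$, so the same word lies in $\mathscr Z_M(x)$. The degenerate cases are handled by the conventions:
\begin{itemize}
\item if $x=1$, both length sets equal $\{0\}$;
\item if $x$ is a non-identity unit-divisor, both length sets are empty, since unit-divisors coincide.
\end{itemize}

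\textbf{Step 3: minimality transfers, and this is the main point.} Let $\mathfrak a = a_1\ast\cdots\ast a_m$ be minimal in $\mathscr Z_N(x)$, and suppose it were not minimal in $M$. Then there would be some $\alpha_1\ast\cdots\ast\alpha_k\in\mathscr Z_M(x)$ with $k<m$ and an injective $\tau$ with $\alpha_i\simeq_M a_{\tau(i)}$. I will show that this word already lies in $\mathscr Z_N(x)$.
\begin{itemize}
\item Each $\alpha_i$ divides $x$ in $M$, as a factor of the product equal to $x$, so divisor-closedness gives $\alpha_i\in N$.
\item $\alpha_i$ is irreducible in $M$, and I must show it is irreducible in $N$. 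It lies in $N^\sharp$ by Step 1. Suppose $\alpha_i=bc$ in $N$ with $b,c$ proper divisors of $\alpha_i$ in $N$ that are not unit-divisors. Since the preorders agree on $N$ and the unit-divisors coincide, $b$ and $c$ are also proper divisors of $\alpha_i$ in $M$ that are not unit-divisors. This contradicts the irreducibility of $\alpha_i$ in $M$.
\item By Step 1, $\alpha_i\simeq_N a_{\tau(i)}$.
\end{itemize}
This contradicts the minimality of $\mathfrak a$ in $N$, so $\mathfrak a$ is minimal in $M$.

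\textbf{Step 4: length sets.} The inclusions $\mathsf L_N(x)\subseteq \mathsf L_M(x)$ and $\mathsf L_N^{\sf m}(x)\subseteq\mathsf L_M^{\sf m}(x)$ follow from Steps 2 and 3. Therefore every length set and every minimal length set of $N$ is one of $M$, which gives $\mathscr L(N)\subseteq\mathscr L(M)$ and $\mathscr L^{\sf m}(N)\subseteq\mathscr L^{\sf m}(M)$.

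The main obstacle is Step 3. Minimality in $M$ quantifies over all $M$-factorizations, so one needs the converse transfer: irreducibles of $M$ that lie in $N$ are irreducible in $N$. This converse is exactly what divisor-closedness, via Step 1, supplies.
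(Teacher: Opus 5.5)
Steps 1--3 are correct. They give a self-contained proof of the first sentence of the statement, which the paper does not prove itself; it defers to Proposition~3.5 and Corollary~3.6 of Cossu--Tringali.

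The gap is in Step 4. From $\mathsf L_N(x)\subseteq\mathsf L_M(x)$ you cannot conclude that $\mathsf L_N(x)\in\mathscr L(M)$, because a subset of a length set of $M$ need not itself be a length set of $M$. The same objection applies to minimal length sets. The containment $\mathscr L(N)\subseteq\mathscr L(M)$ is used later in the strong sense that every length set of $N$ is a length set of $M$, for instance to get $\mathscr L(\mathcal P_{\fin,0}(\mathbb N))\subseteq\mathscr L(\mathcal P_{\fin,1}(H))$. For that you need, for each $x\in N$,
\[
\mathsf L_N(x)=\mathsf L_M(x)\quad\text{and}\quad\mathsf L^{\sf m}_N(x)=\mathsf L^{\sf m}_M(x).
\]
The one-directional first sentence of the statement does not give this on its own either.

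The missing inclusion is already in your hands. The first two bullets of Step 3 never use that the competing word is shorter than $\mathfrak a$. So they show that every $\mathfrak b\in\mathscr Z_M(x)$ lies in $\mathscr Z_N(x)$: each letter divides $x$ in $M$, hence lies in $N$, and is irreducible in $N$. Together with Step 2 this gives $\mathscr Z_N(x)=\mathscr Z_M(x)$.

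Since $\simeq_N$ is the restriction of $\simeq_M$ to $N$ (Step 1), the definitions of minimality in $N$ and in $M$ now quantify over the same set of words with the same equivalence. Hence $\mathscr Z^{\sf m}_N(x)=\mathscr Z^{\sf m}_M(x)$, which gives both inclusions at once and makes your proof by contradiction unnecessary. It follows that the length sets and minimal length sets of $x$ in $N$ and $M$ coincide, with the unit-divisor conventions handled as in your Step 2. Only then do $\mathscr L(N)\subseteq\mathscr L(M)$ and $\mathscr L^{\sf m}(N)\subseteq\mathscr L^{\sf m}(M)$ follow.
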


Length sets are among the most studied arithmetic invariants in the
classical theory of factorizations. In this regard, it is worth noting
that, for an acyclic monoid, every irreducible is an
atom \cite[Corollary 4.4]{Tr20(c)}, so there is no distinction between factorizations and atomic
factorizations. 

We say that $M$ is \evid{factorial} (shortly, \evid{UF}) if it is factorable and any two factorizations of an element in $M^\sharp$ are equivalent; \evid{finite-factorial} (shortly, \evid{FF}) if it is factorable and, for each  $x \in M^\sharp$,  the set of factorizations of $x$ is finite up to equivalence; \evid{half-factorial} (shortly, \evid{HF}) if it is factorable and every length set is a singleton; and \evid{bounded-factorial} (shortly, \evid{BF}) if it is factorable and every length set is finite.

These definitions are modeled on their classical counterparts \cite[Theorem 1.1.10.2 and Definitions 1.1.1.4, 1.2.6, 1.3.1, and 1.5.1]{Ger-Hal-06}, and it is fairly obvious that the implications shown in the following diagram hold:
\[
\begin{tikzcd}[arrows=Rightarrow, row sep=0.5cm]
	&
	\!\text{HF}\arrow[shorten <= 2pt,shorten >= 2pt, shift right=-1ex]{dr}
	&
	\\
	\text{UF}
	\arrow[shorten <= 4pt, shorten >= 4pt, shift right=-1ex]{ur}
	\arrow[shorten <= 7pt, shorten >= 4pt, shift left=-1ex]{dr}
	&  &
	\text{BF} \arrow[shorten <= 1pt, shorten >= 1pt]{r}
	&
	\text{factorable}  \\
	&
	\!\text{FF} \arrow[shorten <= 3pt,shorten >= 3pt, shift left=-1ex]{ur}
	&
\end{tikzcd}
\] 

Analogous definitions can be formulated for minimal factorizations. More explicitly,
$M$ is \evid{UmF} if it is factorable and any two minimal factorizations of an element in $M^\sharp$ are equivalent; \evid{FMF} if it is factorable and, for each $x \in M^\sharp$, the set of minimal factorizations of $x$ is finite up to equivalence; and
\evid{HmF} (resp., \evid{BmF}) if it is factorable and every minimal length set is a singleton (resp., is finite).

Lastly, restricting to atomic factorizations yields the
corresponding notions of \evid{UF-atomic}, \ldots, \evid{BF-atomic},
\evid{UmF-atomic}, \ldots, and \evid{BmF-atomic} monoid.

\section{Arithmetic in power monoids.}
\label{sect:04}

Throughout, $H$ denotes a monoid, and no additional assumptions on $H$
are made unless explicitly stated.

In this section, we survey several results concerning 
irreducible, atomic, and minimal factorizations in $\mathcal P_{\fin,1}(H)$ and $\mathcal P_{\fin,\times}(H)$, and also prove a number of new ones. 
The finitary power monoid $\mathcal P_\fin(H)$ of $H$ will be mentioned only occasionally, since the current 
state of knowledge does not yet allow for a systematic treatment comparable 
to that of $\mathcal P_{\fin,1}(H)$ and $\mathcal P_{\fin,\times}(H)$.
Some partial results, notably by Fan and Tringali \cite{Fa-Tr18}, Antoniou 
and Tringali \cite{An-Tr18}, Bienvenu and Geroldinger \cite{Bie-Ger-22}, 
Gonzalez et al.~\cite{Gon-Li-Rab-Rod-Tir-2025}, and Gotti et al.~\cite{
Agg-Got-Su-2024, Dan-Got-Hon-Li-Sch-2025}, will not be discussed here.

\subsection{Overture.} There are many arguments supporting the claim that  $\mathcal P_{\fin,1}(H)$ is, in a sense, tamer than other power monoids. One basic point to keep in mind is that 
\begin{equation}
\label{eq:to-divide-implies-to-be-contained}
\text{if } X \text{ divides } Y \text{ in } \mathcal P_{\fin,1}(H), \text{ then } X \subseteq Y.
\end{equation}
Another elementary observation (see, e.g., Proposition 3.2(ii) in \cite{An-Tr18}) is that
\begin{equation}
\label{equ:increasing-size}
|XY| \ge \max(|X|, |Y|),\qquad \text{for all }
X, Y \in \mathcal P_{\mathrm{fin},\times}(H).
\end{equation}

Eqs.~\eqref{eq:to-divide-implies-to-be-contained} and \eqref{equ:increasing-size} enter into the proof of the next two results, where a monoid $M$ is said to satisfy the \evid{ascending chain condition} (for short, ACC) \evid{on principal two-sided ideals} if there is no sequence 
$x_1, \allowbreak x_2, \dots$ in $M$ such that $M x_n M \subsetneq Mx_{n+1} M$ (or equivalently, $x_{n+1}$ is a proper divisor of $x_n$) for all $n$. 

\begin{theorem}
\label{thm:ACC-and-Dedekind-finiteness}
The following hold:

\begin{enumerate}[label=\textup{(\roman{*})}]
\item\label{thm:ACC-and-Dedekind-finiteness(i)} $\mathcal P_{\mathrm{fin},1}(H)$ is a Dedekind-finite monoid with trivial 
unit group, satisfies the ACC on principal two-sided ideals, and is factorable.

\item\label{thm:ACC-and-Dedekind-finiteness(ii)} $\mathcal P_{\fin,1}(K)$ is a divisor-closed submonoid of $\mathcal P_{\fin,1}(H)$ for every submonoid $K$ of $H$ (regardless of whether $K$ is divisor-closed in $H$).
\end{enumerate}
\end{theorem}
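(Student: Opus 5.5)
The plan is to work throughout with the two elementary facts~\eqref{eq:to-divide-implies-to-be-contained} and~\eqref{equ:increasing-size}; in particular, every element of $\mathcal P_{\fin,1}(H)$ contains $1_H$.

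For part~\ref{thm:ACC-and-Dedekind-finiteness(i)}, start with the units. If $XY = \{1_H\}$ with $X, Y \in \mathcal P_{\fin,1}(H)$, then $1_H \in Y$ gives $X = X\{1_H\} \subseteq XY = \{1_H\}$, so $X = \{1_H\}$. Symmetrically, $Y = \{1_H\}$. This shows at once that the monoid is Dedekind-finite and that its unit group is trivial. It also shows that the only unit-divisor is $\{1_H\}$, because $X \mid \{1_H\}$ forces $X \subseteq \{1_H\}$ by~\eqref{eq:to-divide-implies-to-be-contained}.

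Next comes the ACC. Suppose $X_{n+1}$ is a proper divisor of $X_n$ for every $n$. By~\eqref{eq:to-divide-implies-to-be-contained}, $X_{n+1} \subseteq X_n$, and the inclusion is strict: if $X_{n+1} = X_n$, then $X_n$ would divide $X_{n+1}$. So $|X_1| > |X_2| > \cdots$ is an infinite strictly decreasing sequence of positive integers, which is a contradiction.

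For factorability, induct on $|X|$ for $X \in \mathcal P_{\fin,1}(H)^\sharp = \mathcal P_{\fin,1}(H) \setminus \{\{1_H\}\}$. If $X$ is not irreducible, then by Dedekind-finiteness $X = BC$ with $B, C$ proper divisors of $X$ that are non-units. Each of them is then a proper subset of $X$, as in the ACC argument, so $|B|, |C| < |X|$. Both are $\ne \{1_H\}$, so the induction hypothesis factors each into irreducibles, and concatenating the two factorizations gives one for $X$. The base case is covered because the induction is strong on $|X| \ge 2$. The one point to check carefully is that proper divisors are strictly smaller; this is where~\eqref{eq:to-divide-implies-to-be-contained} is really used. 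Inequality~\eqref{equ:increasing-size} is not needed once containment is available.

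For part~\ref{thm:ACC-and-Dedekind-finiteness(ii)}, let $K$ be a submonoid of $H$. Since submonoids share the identity, $\mathcal P_{\fin,1}(K)$ is a submonoid of $\mathcal P_{\fin,1}(H)$. Now let $X \in \mathcal P_{\fin,1}(H)$ divide some $Y \in \mathcal P_{\fin,1}(K)$. By~\eqref{eq:to-divide-implies-to-be-contained}, $X \subseteq Y \subseteq K$, and $1_H = 1_K \in X$, so $X \in \mathcal P_{\fin,1}(K)$. Hence $\mathcal P_{\fin,1}(K)$ is divisor-closed in $\mathcal P_{\fin,1}(H)$, and no hypothesis on $K$ beyond being a submonoid is needed.

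The only genuinely nontrivial ingredient is~\eqref{eq:to-divide-implies-to-be-contained} itself. If a proof is required, it is immediate: $Y = UXV$ with $1_H \in U \cap V$ gives $X = \{1_H\}X\{1_H\} \subseteq UXV = Y$.
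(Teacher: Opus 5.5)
Your proof is correct. The paper gives no argument of its own for this theorem. It only cites \cite[Proposition~4.11]{Tr20(c)} for (i) and \cite[Proposition~3.2(iii)]{An-Tr18} for (ii). Your arguments for the unit group, Dedekind-finiteness, the ACC, and part (ii) are the natural ones based on Eq.~\eqref{eq:to-divide-implies-to-be-contained}. Your ACC argument mirrors the cardinality argument the survey gives for $\mathcal P_{\fin,\times}(H)$ in the proof of Theorem~\ref{thm:divisor-closedness}\ref{thm:divisor-closedness(iv)}.

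Where you genuinely diverge is factorability. In that same proof, the survey's template is to establish Dedekind-finiteness and the ACC on principal two-sided ideals, and then invoke the general result that every Dedekind-finite monoid satisfying this ACC is factorable \cite[Theorem~3.10]{Tr20(c)}. You instead prove factorability directly by strong induction on $|X|$, using that a proper divisor of $X$ is a proper subset of $X$.

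Your route is self-contained and makes plain that Eq.~\eqref{eq:to-divide-implies-to-be-contained} is the only real input; the ACC becomes a by-product rather than a prerequisite. The general-theorem route is modular, which is what lets the survey handle $\mathcal P_{\fin,\times}(H)$ (where containment fails) with no extra work. Your induction would adapt there too, since the cardinality argument in that proof shows $|X|$ still strictly drops along proper divisors.
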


\begin{proof}
See \cite[Proposition 4.11(i)--(ii)]{Tr20(c)} for \ref{thm:ACC-and-Dedekind-finiteness(i)}, and \cite[Proposition 3.2(iii)]{An-Tr18} for \ref{thm:ACC-and-Dedekind-finiteness(ii)}.
\end{proof}

\begin{theorem}
\label{thm:divisor-closedness}
The following hold:
\begin{enumerate}[label=\textup{(\roman{*})}]
\item\label{thm:divisor-closedness(i)} The units of $\mathcal P_{\fin,\times}(H)$ and $\mathcal P_\fin(H)$ are the singletons $\{u\}$ with $u \in H^\times$.

\item\label{thm:divisor-closedness(ii)} $\mathcal P_{\fin,\times}(H)$ is divisor-closed in $\mathcal P_\fin(H)$ if and only if $H$ is Dedekind-finite.

\item\label{thm:divisor-closedness(iii)} $\mathcal P_{\fin,\times}(H)$ is Dedekind-finite (regardless of any conditions on $H$),
whereas $\mathcal P_{\fin}(H)$ is Dedekind-finite if and only if $H$ is.

\item\label{thm:divisor-closedness(iv)} $\mathcal P_{\fin,\times}(H)$ satisfies the ACC on principal two-sided ideals and is factorable.
\end{enumerate}
\end{theorem}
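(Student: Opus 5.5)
We will prove the four items in order, since each later item uses the earlier ones.

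For \ref{thm:divisor-closedness(i)}, suppose $XY = YX = \{1_H\}$ in $\mathcal P_\fin(H)$. Then $xy = yx = 1_H$ for all $x \in X$ and $y \in Y$. Fix $y_0 \in Y$. Every $x \in X$ is then a two-sided inverse of $y_0$, and inverses are unique, so $X$ is a singleton $\{u\}$ with $u \in H^\times$. Conversely, $\{u\}\{u^{-1}\} = \{1_H\}$. Since $\mathcal P_{\fin,\times}(H)$ is a submonoid containing all such singletons, the same description applies to it.

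For \ref{thm:divisor-closedness(ii)}, assume first that $H$ is Dedekind-finite, and let $X \mid Y$ in $\mathcal P_\fin(H)$ with $Y \in \mathcal P_{\fin,\times}(H)$. Write $Y = AXB$ and pick $u \in Y \cap H^\times$, so $u = axb$ for some $a \in A$, $x \in X$, $b \in B$. Then $(u^{-1}a)(xb) = 1_H$. By Dedekind-finiteness, $xb$ is a unit, hence $x$ has a right inverse. Similarly $ax$ has a right inverse $bu^{-1}$, so $ax$ is a unit and $x$ has a left inverse. Therefore $x \in H^\times$ and $X \in \mathcal P_{\fin,\times}(H)$.

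For the converse, suppose $xy = 1_H \ne yx$. Then $\{x\} \mid \{1_H\}$ in $\mathcal P_\fin(H)$, but $x$ is not a unit: if it were, $y = x^{-1}$ and $yx = 1_H$. So $\{x\} \notin \mathcal P_{\fin,\times}(H)$, and divisor-closedness fails.

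For \ref{thm:divisor-closedness(iii)}, take $X, Y \in \mathcal P_{\fin,\times}(H)$ with $XY = \{1_H\}$. Choose units $u \in X$ and $v \in Y$. Then $uy = 1_H$ for all $y \in Y$, so $Y = \{u^{-1}\}$. Likewise $xv = 1_H$ for all $x \in X$, so $X = \{v^{-1}\}$. Thus $X$ and $Y$ are inverse singletons, and $YX = \{1_H\}$.

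For $\mathcal P_\fin(H)$, the reverse direction goes as follows. Let $H$ be Dedekind-finite and $XY = \{1_H\}$. Then $xy = 1_H$ gives $yx = 1_H$ for every pair, so every $x$ is a unit, and uniqueness of inverses forces $X$ and $Y$ to be singletons. For the forward direction, apply the embedding $x \mapsto \{x\}$: a pair with $xy = 1_H \ne yx$ in $H$ yields the same failure in $\mathcal P_\fin(H)$.

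For \ref{thm:divisor-closedness(iv)}, which I expect to be the main obstacle, the idea is to use the size of sets as a potential function, via Eq.~\eqref{equ:increasing-size}. First we show the following claim:
\begin{center}
if $X$ is a proper divisor of $Y$ in $\mathcal P_{\fin,\times}(H)$, then $|X| < |Y|$.
\end{center}
Write $Y = AXB$. By \eqref{equ:increasing-size}, $|Y| \ge |X|$. If $|Y| = |X|$, pick units $a \in A$ and $b \in B$. Then $aXb \subseteq Y$ and $|aXb| = |X| = |Y|$, so $Y = aXb$ and $X = a^{-1}Yb^{-1}$. This means $Y$ divides $X$, contradicting properness. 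Hence $|X| < |Y|$, so any strictly ascending chain of principal ideals has strictly decreasing sizes, and the ACC on principal two-sided ideals follows.

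Factorability then follows by a standard induction on $|Y|$, using that $\mathcal P_{\fin,\times}(H)$ is Dedekind-finite by \ref{thm:divisor-closedness(iii)}. If a non-unit $Y$ is not irreducible, write $Y = BC$ with $B$ and $C$ proper divisors of $Y$ that are non-units. By the claim, $|B| < |Y|$ and $|C| < |Y|$, so the induction hypothesis applies to both factors. The base case is the sizes for which no non-unit proper divisor exists, where $Y$ is automatically irreducible.
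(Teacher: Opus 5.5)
Your proposal is correct and follows essentially the same route as the paper. For (ii)--(iii) you use the embedding $x \mapsto \{x\}$ for necessity and element-level Dedekind-finiteness arguments for sufficiency, and for the ACC you use unit translates $aXb \subseteq Y$ to show that a proper divisor in $\mathcal P_{\fin,\times}(H)$ has strictly smaller cardinality. The only differences are that you prove (i) directly rather than citing it, and you obtain factorability by a strong induction on $|Y|$ instead of invoking the general fact that Dedekind-finite monoids satisfying the ACCP are factorable.
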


\begin{proof}
For \ref{thm:divisor-closedness(i)}, see \cite[Proposition 3.2(iv)]{An-Tr18}. 
For \ref{thm:divisor-closedness(ii)}, use that $H$ embeds into $\mathcal P_\fin(H)$ via the homomorphism $x \mapsto \{x\}$, and recall that a monoid is Dedekind-finite if and only if a 
non-empty product is a unit precisely when each factor is a unit.

\ref{thm:divisor-closedness(iii)} The Dedekind-finiteness of $\mathcal P_{\fin,\times}(H)$ follows directly from \ref{thm:divisor-closedness(i)} and Eq.~\eqref{equ:increasing-size}. 
Moreover, for $\mathcal P_\fin(H)$ to be Dedekind-finite it is clearly necessary that $H$ is, since $H$ embeds into $\mathcal P_\fin(H)$ via the homomorphism $x \mapsto \{x\}$.

Now to finish the proof assume that $H$ is Dedekind-finite. If $XY = \{1_H\}$ for some $X, Y \in \mathcal P_\fin(H)$, then each of $X$ and $Y$ contains a unit of $H$, namely, $X, Y \in \mathcal P_{\fin,\times}(H)$. This implies from the previous part that $X$ and $Y$ are units of $\mathcal P_\fin(H)$, which shows in turn that $\mathcal P_\fin(H)$ is Dedekind-finite.

\ref{thm:divisor-closedness(iv)} Suppose, for a contradiction, that $\mathcal P_{\fin,\times}(H)$ does not satisfy the ascending chain condition on principal two-sided ideals (ACCP, for short).
There then exists a sequence $X_1, X_2, \dots$ in $\mathcal P_{\fin,\times}(H)$ such that $X_{n+1}$ is a \textit{proper} divisor of $X_n$ for each $n$.
In particular, there exist $u_n, v_n \in H^\times$ with 
$u_n X_{n+1} v_n \subseteq X_n$, and this inclusion must be strict;
otherwise, $X_{n+1} = u_n^{-1} X_n v_n^{-1}$, making $X_n$ a divisor of $X_{n+1}$. It follows that $
|X_{n+1}| = |u_n X_{n+1} v_n| < |X_n|$ for each $n$,
which is impossible, as it would result in a strictly decreasing (infinite) sequence of positive integers.

Thus, $\mathcal P_{\fin,\times}(H)$ satisfies the ACCP,
which completes the proof, since $\mathcal P_{\fin,\times}(H)$ is Dedekind-finite by \ref{thm:divisor-closedness(iii)}
and every Dedekind-finite monoid satisfying the ACCP is known to be factorable
\cite[Theorem~3.10]{Tr20(c)}.
\end{proof}

\subsection{Interlude on cancellation.} We have already mentioned that power monoids are far from being cancellative, which is one reason why much of the classical machinery of factorization theory breaks down when applied to the study of their arithmetic. The next results make this statement slightly more precise.

\begin{proposition}
\label{prop:cancellativity}
The following conditions are equivalent:
\begin{enumerate}[label=\textup{(\alph{*})}]
\item\label{prop:cancellativity(i)} $H$ is trivial.
\item\label{prop:cancellativity(ii)} $\mathcal P_{\fin}(H)$ is cancellative.
\item\label{prop:cancellativity(iii)} $\mathcal P_{\fin,\times}(H)$ is cancellative.
\item\label{prop:cancellativity(iv)} $\mathcal P_{\fin,1}(H)$ is cancellative.
\end{enumerate}
\end{proposition}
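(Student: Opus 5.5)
We prove the cycle of implications (a) $\Rightarrow$ (b) $\Rightarrow$ (c) $\Rightarrow$ (d) $\Rightarrow$ (a). Only the last implication has real content.

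For (a) $\Rightarrow$ (b), note that if $H = \{1_H\}$, then $\mathcal P_\fin(H)$ consists of the single element $\{1_H\}$. It is therefore the trivial monoid, which is cancellative.

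The implications (b) $\Rightarrow$ (c) and (c) $\Rightarrow$ (d) are immediate. By construction, $\mathcal P_{\fin,1}(H) \subseteq \mathcal P_{\fin,\times}(H) \subseteq \mathcal P_\fin(H)$, and each is a submonoid of the next. A submonoid of a cancellative monoid is cancellative, since the defining inequalities $ax \ne ay$ and $xa \ne ya$ only involve elements of the submonoid.

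For (d) $\Rightarrow$ (a), we argue by contraposition. Assume $H$ is non-trivial, and pick $h \in H$ with $h \ne 1_H$. Set $A := \{1_H, h\} \in \mathcal P_{\fin,1}(H)$. I will exhibit two distinct elements $X, Y \in \mathcal P_{\fin,1}(H)$ with $AX = AY$. The natural first attempt is $X := \{1_H\}$ and $Y := A$. Then $AX = A$ and $AY = A^2 = \{1_H, h, h^2\}$, and these agree exactly when $h^2 \in \{1_H, h\}$. That works if $h$ is idempotent or has order $2$, but not in general. The robust choice is instead
\[
X := A \quad \text{and} \quad Y := A^2 = \{1_H, h, h^2\}.
\]
Then $AX = A^2$ and $AY = A^3$, so $A^2 = A^3$ would be needed, which again fails in general (e.g., in $\mathbb N$). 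So the right idea is to cancel a large factor on the left, using the absorbing-type behaviour of big sets.

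The main obstacle is finding a single construction valid for every non-trivial $H$. My plan is to take $A := \{1_H, h, h^2\}$, $X := \{1_H, h^2\}$, and $Y := \{1_H, h, h^2\}$. In both cases the product is
\[
AX = \{1_H, h, h^2, h^3, h^4\} = AY.
\]
For $AX$, the products $1 \cdot 1, h \cdot 1, h^2 \cdot 1, 1 \cdot h^2, h \cdot h^2, h^2 \cdot h^2$ give exactly these five elements. For $AY$, every product $h^i h^j$ with $i, j \in \llb 0, 2 \rrb$ is a power $h^k$ with $k \in \llb 0, 4 \rrb$, and every such $k$ is attained; this needs no distinctness of the powers. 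It remains to check that $X \ne Y$, that is, $h \notin \{1_H, h^2\}$. We have $h \ne 1_H$ by choice, but $h = h^2$ is possible when $h$ is a non-identity idempotent. I would handle that case separately: if $h^2 = h$, take $A := \{1_H, h\}$, $X := \{1_H\}$, $Y := A$. Then $AX = A = A^2 = AY$, and $X \ne Y$ because $h \ne 1_H$.

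In either case, $\mathcal P_{\fin,1}(H)$ fails to be cancellative. This completes the contrapositive, and with it the cycle of implications.
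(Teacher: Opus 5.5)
Your proof is correct and follows essentially the paper's route: the chain (a) $\Rightarrow$ (b) $\Rightarrow$ (c) $\Rightarrow$ (d) comes from passing to submonoids, and (d) $\Rightarrow$ (a) exhibits a failure of left cancellation built from powers of a single non-identity element, with the degenerate case $h^2 = h$ (where $\{1_H,h\}^2 = \{1_H,h\}$) handled separately. The differences are cosmetic. The paper uses the smaller left factor $\{1_H,x\}$, via $\{1_H,x\}\{1_H,x,x^2\} = \{1_H,x\}\{1_H,x^2\}$, and splits according to whether the order of $x$ is $2$ or greater than $2$, whereas your generic construction already covers the case $h^2 = 1_H$. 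The exploratory false starts in your write-up can simply be deleted.
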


\begin{proof}
Since every subsemigroup of a cancellative semigroup is cancellative, it is obvious that \ref{prop:cancellativity(i)} $\Rightarrow$ \ref{prop:cancellativity(ii)} $\Rightarrow$ \ref{prop:cancellativity(iii)} $\Rightarrow$ \ref{prop:cancellativity(iv)}. It remains to see that \ref{prop:cancellativity(iv)} $\Rightarrow$ \ref{prop:cancellativity(i)}.

If $H$ contains an element $x$ of order greater than $2$, then $\{1_H, x\}^2 \ne \{1_H, \allowbreak x^2\}$, yet 
$\{1_H, x\}^3 = \{1_H, x\} \{1_H, x^2\}$.  
If, on the other hand, $H$ has an element $y$ of order $2$, then 
$\{1_H\} \ne \{1_H, y\} = \{1_H, y\}^2$.  
In either case, $\mathcal P_{\fin,1}(H)$ is not cancellative.
\end{proof}

Although cancellativity fails dramatically in power monoids, it is still possible to guarantee certain ``surrogates'' (such as unit-cancellativity or torsion-freeness) under assumptions that, while strong, hold in many situations of interest.

\begin{proposition}
\label{prop:unit-cancellativity}
If $H$ is a submonoid of a torsion-free group, then $\mathcal P_{\fin,1}(H)$ is acyclic and, hence, unit-cancellative and torsion-free.
\end{proposition}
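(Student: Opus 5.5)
The plan is to prove acyclicity directly; unit-cancellativity will then follow from the remark in Section~\ref{sect:generalities} that every acyclic monoid is unit-cancellative. Torsion-freeness will be obtained by a separate short argument.

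\emph{Acyclicity.} By Theorem~\ref{thm:ACC-and-Dedekind-finiteness}\ref{thm:ACC-and-Dedekind-finiteness(i)}, the unit group of $\mathcal P_{\fin,1}(H)$ is trivial, so the non-units are exactly the sets $X \in \mathcal P_{\fin,1}(H)$ with $|X| \ge 2$. Suppose $UXV = X$ with $U, X, V \in \mathcal P_{\fin,1}(H)$, and without loss of generality $|U| \ge 2$ (the case $|V|\ge 2$ is symmetric). Since $1_H \in U \cap V$, we have $X \subseteq UX \subseteq UXV = X$, so $UX = X$. Pick $u \in U$ with $u \ne 1_H$; then $uX \subseteq X$. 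Because $H$ embeds in a group $G$, left multiplication by $u$ is injective, so $|uX| = |X|$ and hence $uX = X$. Iterating gives $u^k X = X$ for all $k \ge 1$. Now fix $x \in X$: the elements $u^k x$, $k \in \mathbb N$, all lie in the finite set $X$, so $u^k x = u^l x$ for some $k < l$. Cancelling in $G$ yields $u^{l-k} = 1$, contradicting the torsion-freeness of $G$, since $u \ne 1$. This gives acyclicity.

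\emph{Torsion-freeness.} Suppose $X \in \mathcal P_{\fin,1}(H)$ has finite order and $X \ne \{1_H\}$, and pick $x \in X \setminus \{1_H\}$. Since $1_H \in X$, the powers $X^k$ form an increasing chain. This chain contains $\{1, x, \dots, x^k\} \subseteq X^k$, and these $k+1$ elements are pairwise distinct because $x$ has infinite order in $G$. Hence $|X^k| \ge k+1$ for every $k$, so the set $\{X^k : k \in \mathbb N\}$ is infinite, a contradiction.

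The only delicate step is the passage from $UX = X$ to a torsion contradiction. The key points there are that the inclusion $uX \subseteq X$ becomes an equality by cardinality, which uses cancellativity in $G$, and that the resulting periodicity forces a torsion element. Everything else is routine.
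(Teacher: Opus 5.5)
Your proof is correct, but it takes a different route from the paper's. The paper gets acyclicity in one stroke from Kemperman's inequality in torsion-free groups: it gives $|X| = |AXB| \ge |A| + |X| + |B| - 2$, hence $|A| = |B| = 1$. It then deduces torsion-freeness abstractly: in an acyclic monoid, $X^m = X^n$ with $m < n$ forces $X$ to be a unit, and the only unit of $\mathcal P_{\fin,1}(H)$ is $\{1_H\}$.

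You instead use $1_H \in U \cap V$ to reduce $UXV = X$ to $UX = X$, and then run a pigeonhole argument on the orbit $\{u^k x : k \in \mathbb N\}$ inside the finite set $X$. You prove torsion-freeness directly from $|X^k| \ge k+1$.

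The paper's approach yields a quantitative growth bound, but it invokes a nontrivial external theorem that genuinely needs the ambient torsion-free group. Yours is elementary and self-contained, and it actually needs less than you use. From the inclusion $uX \subseteq X$ alone (the cardinality step is unnecessary) you get $u^kX \subseteq X$ for all $k$ by induction. Taking $x = 1_H \in X$ then gives $u^k \in X$ for every $k$, so $u$ has finite order in $H$, with no cancellation involved.

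Your torsion-freeness argument likewise only uses that non-identity elements of $H$ have infinite order. So your method shows the proposition holds for every torsion-free monoid $H$, not just for submonoids of torsion-free groups.
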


\begin{proof}
Obviously, every acyclic monoid is unit-cancellative. Moreover, it is clear that any acyclic monoid $M$ with trivial unit group is torsion-free: if $x^m = x^n$ for some $x \in M$ and $m, n \in \mathbb N^+$ with $m < n$, then $x$ must be a unit and hence the identity. By Theorem \ref{thm:ACC-and-Dedekind-finiteness}\ref{thm:ACC-and-Dedekind-finiteness(i)}, it is therefore sufficient to show that $\mathcal P_{\fin,1}(H)$ is acyclic.

Suppose that $AXB = X$ for some $A, B, X \in \mathcal P_{\fin,1}(H)$. Since $H$ is contained in a torsion-free group, it follows from Kemperman's inequality (see the unnumbered corollary to Theorem 5 on p.~251 of \cite{Kem-1956}) that 
\[
|X| = |AXB| \ge |A| + |X| + |B| - 2.
\]
Consequently, we must have $|A| + |B| \le 2$, which is only possible if $|A| = |B| = 1$, that is, $A = B = \{1_H\}$. This shows that $\mathcal P_{\fin,1}(H)$ is acyclic (as desired).
\end{proof}

For the next proposition, we recall that a semigroup $S$ is \evid{linearly orderable} 
if there exists a total order $\preceq$ on $S$ such that 
$x \prec y$ (that is, $x \preceq y$ and $x \ne y$) implies 
$uxv \prec uyv$ for all $u, v \in S$. 
For instance, any cancellative commutative torsion-free monoid is linearly orderable, 
and many more examples are listed in \cite{Tri-2015}.

We also say that a monoid $M$ is \textsc{strongly unit-cancellative} 
if $xy \ne x$ and $yx \ne x$ for all $x, y \in M$ with $y \ne 1_M$. 
Of course, every strongly unit-cancellative monoid is unit-cancellative, 
and the converse holds when the unit group is trivial.

\begin{proposition}
If $H$ is a linearly orderable monoid, then $\mathcal P_\fin(H)$ and $\mathcal P_{\fin,\times}(H)$ are strongly unit-cancellative and torsion-free.
\end{proposition}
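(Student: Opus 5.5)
The plan is to exploit the compatible total order $\preceq$ on $H$ through the extremal elements of finite sets. Two preliminary observations come first. First, a linearly orderable monoid is cancellative: if $x \ne y$, say $x \prec y$, then $uxv \prec uyv$, so $uxv \ne uyv$. Taking $v = 1_H$, and then $u = 1_H$, gives left and right cancellation. Second, $\preceq$ is monotone in each variable: if $x \preceq x'$ and $y \preceq y'$, then $xy \preceq x'y \preceq x'y'$. Consequently, for all $X, Y \in \mathcal P_\fin(H)$,
\[
\max(XY) = \max X \cdot \max Y
\quad\text{and}\quad
\min(XY) = \min X \cdot \min Y .
\]
Since $\mathcal P_{\fin,\times}(H)$ is a submonoid of $\mathcal P_\fin(H)$, both properties will pass from $\mathcal P_\fin(H)$ to $\mathcal P_{\fin,\times}(H)$. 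So it suffices to treat $\mathcal P_\fin(H)$.

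\emph{Strong unit-cancellativity.} Suppose $XY = X$ with $X, Y \in \mathcal P_\fin(H)$. Put $a = \min X$, $b = \max X$, $c = \min Y$ and $d = \max Y$. By the displayed identities, $ac = a = a \cdot 1_H$ and $bd = b = b \cdot 1_H$. Left cancellation gives $c = d = 1_H$, so $Y = \{1_H\}$. The case $YX = X$ is symmetric and uses right cancellation.

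\emph{Torsion-freeness.} If $X$ has finite order, then $X^m = X^n$ for some $1 \le m < n$. Taking maxima gives $b^m = b^m b^{n-m}$ with $b = \max X$, and cancellation yields $b^{n-m} = 1_H$. Now, in a linearly orderable monoid, $1_H \prec b$ implies $b^{k-1} = b^{k-1} \cdot 1_H \prec b^{k-1} b = b^k$ for every $k$, so $1_H \prec b^k$ for all $k \ge 1$. Symmetrically, $b \prec 1_H$ forces $b^k \prec 1_H$. Hence $b = 1_H$. The same argument applied to $\min X$ gives $\min X = 1_H$, so $X = \{1_H\}$.

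The only point needing care is the identity $\max(XY) = \max X \cdot \max Y$, together with the deduction of cancellativity from linear orderability; once those are in place, everything else reduces to one-line cancellations.
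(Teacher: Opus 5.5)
Your proof is correct and is essentially the paper's argument: both reduce to $\mathcal P_\fin(H)$ because the two properties pass to submonoids, and both get torsion-freeness from $\max X^k = (\max X)^k$. The paper shows directly that $\beta^m \prec \beta^n$ when $1_H \prec \beta = \max X$, which is your cancellation step in another form. The one difference is that the paper simply cites Proposition~3.5(ii) of Fan and Tringali for strong unit-cancellativity, whereas you prove it directly with the same min/max identities and cancellation in $H$, and that argument is sound.
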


\begin{proof}
For a proof that $\mathcal P_\fin(H)$ is strongly unit-cancellative, see Proposition 3.5(ii) in \cite{Fa-Tr18}.
Since the properties of being strongly unit-cancellative or torsion-free pass to submonoids, it remains to show that $\mathcal P_\fin(H)$ is torsion-free.

To this end, let $\preceq$ be a total order on $H$ such that,
whenever $x \prec y$, one has $uxv \prec uyv$
for all $u, v \in H$. 
Every non-empty finite subset $X$ of $H$ has a $\preceq$-minimum and a $\preceq$-maximum, i.e., there exist $\alpha, \beta \in X$ such that
\[
\alpha \preceq x \preceq \beta,
\qquad\text{for all }x \in X. 
\]

Now, suppose $X \ne \{1_H\}$. It is clear that either $\alpha \prec 1_H$ or $
1_H \prec \beta$, and we may assume without loss of generality that the latter holds (the other case being symmetric).
If $m < n$, then $
\max X^m = \beta^m \prec \beta^n = \max X^n$,
and hence $X^m \ne X^n$. 

Since $X$ is an arbitrary non-identity element of $\mathcal{P}_{\text{fin}}(H)$, 
we conclude that $\mathcal{P}_{\text{fin}}(H)$ is indeed torsion-free.
\end{proof}

\subsection{Minuetto of irreducibles.} We now turn to clarifying the relationship between irreducibles, atoms, and quarks.
We begin with a key property of two-element sets.

\begin{proposition}
\label{prop:two-element-irreds}
$\{1_H, x\}$ is a quark of $\mathcal P_{\fin,1}(H)$ for each non-identity $x \in H$, while $\{u, x\}$ is a quark of $\mathcal P_{\fin,\times}(H)$ for all $u \in H^\times$ and $x \in H$ with $u \ne x$. 
\end{proposition}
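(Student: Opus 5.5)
Recall that a quark is an element that is not a unit-divisor and whose only proper divisors are unit-divisors. Both monoids in question are Dedekind-finite: $\mathcal P_{\fin,1}(H)$ by Theorem~\ref{thm:ACC-and-Dedekind-finiteness}\ref{thm:ACC-and-Dedekind-finiteness(i)} and $\mathcal P_{\fin,\times}(H)$ by Theorem~\ref{thm:divisor-closedness}\ref{thm:divisor-closedness(iii)}. So unit-divisors are exactly units, and by Theorem~\ref{thm:divisor-closedness}\ref{thm:divisor-closedness(i)} the units are the singletons $\{v\}$ with $v \in H^\times$. (In $\mathcal P_{\fin,1}(H)$ the unit group is trivial, so the only unit is $\{1_H\}$.)

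A two-element set is therefore never a unit-divisor. The plan is to show: if $Y$ divides a two-element set $X$ and $Y$ is not a unit, then $X$ divides $Y$. We treat $\mathcal P_{\fin,\times}(H)$ first and deduce the reduced case from it.

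Let $X=\{u,x\}$ with $u\in H^\times$, $u \ne x$, and suppose $X = AYB$ with $A,B,Y \in \mathcal P_{\fin,\times}(H)$ and $Y$ not a unit, so $|Y|\ge 2$. Choose units $a\in A\cap H^\times$ and $b\in B\cap H^\times$. Then $aYb\subseteq AYB = X$. Since $y\mapsto ayb$ is injective, $|aYb|=|Y|\ge 2=|X|$, and hence $aYb=X$. It follows that $Y = a^{-1}Xb^{-1} = \{a^{-1}\}X\{b^{-1}\}$, where $\{a^{-1}\}$ and $\{b^{-1}\}$ lie in $\mathcal P_{\fin,\times}(H)$. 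Thus $X$ divides $Y$, so every proper divisor of $X$ is a unit, and $X$ is a quark. The only ingredient here is Eq.~\eqref{equ:increasing-size}, specifically the injectivity of multiplication by units, which is also its mechanism.

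For $\mathcal P_{\fin,1}(H)$, take $X=\{1_H,x\}$ with $x\ne 1_H$. Suppose $X=AYB$ with $A,B,Y\in\mathcal P_{\fin,1}(H)$ and $Y\ne\{1_H\}$. Taking $a=b=1_H$ in the argument above gives $Y\subseteq X$ with $|Y|\ge 2$, so $Y=X$, and $X$ divides $Y$ trivially. Alternatively, one can use Eq.~\eqref{eq:to-divide-implies-to-be-contained} directly: it gives $Y\subseteq X$, which forces $Y=\{1_H\}$ or $Y=X$.

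There is no real obstacle. The only care needed is to check that the elements witnessing $X \mid Y$ lie in the correct monoid, which holds because singletons of units belong to $\mathcal P_{\fin,\times}(H)$.
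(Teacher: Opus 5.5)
Your proof is correct and follows essentially the same route as the paper. In $\mathcal P_{\fin,\times}(H)$ you pick units in the outer factors, use the cardinality count to force $aYb = X$, and conclude that $X$ divides $Y$; the reduced case then follows from Eq.~\eqref{eq:to-divide-implies-to-be-contained}, and your explicit appeal to Dedekind-finiteness to identify unit-divisors with units just spells out a point the paper leaves implicit.
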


\begin{proof}
The first part is straightforward from Eq.~\eqref{eq:to-divide-implies-to-be-contained}, so we focus on the second.

Let $A \in \mathcal P_{\fin,\times}(H)$ be a two-element set, and suppose that $X$ is a divisor of $A$ in $\mathcal P_{\fin,\times}(H)$ but not a unit. By Theorem \ref{thm:divisor-closedness}\ref{thm:divisor-closedness(i)}, $X$ has at least two elements, and there exist $S, T \in \mathcal P_{\fin,\times}(H)$ such that $A = SXT$. Pick a unit $u \in S$ and a unit $v \in T$. Since $uXv \subseteq A$ and $2 \le |X| = |uXv| = |A| = 2$, it follows that $A = uXv$, and hence $X = \allowbreak u^{-1} Av^{-1}$. Thus, $A$ divides $X$, thereby proving that $A$ is a quark.
\end{proof}

We continue with a characterization of quarks and atomicity in $\mathcal P_{\fin,1}(H)$.

\begin{theorem}\label{thm:Pfin1-is-factorable}
The following hold in $\mathcal P_{\fin,1}(H)$:
\begin{enumerate}[label=\textup{(\roman{*})}]
\item\label{thm:Pfin1-is-factorable(i)} Every irreducible is a quark.
\item\label{thm:Pfin1-is-factorable(ii)} Every irreducible is an atom, with the exception of the two-element sets $\{1_H, x\} \subseteq H$ 
for which $x^2 = 1_H$ or $x^2 = x$.
\end{enumerate}
\end{theorem}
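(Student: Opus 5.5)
The plan rests on two structural facts from Theorem~\ref{thm:ACC-and-Dedekind-finiteness}\ref{thm:ACC-and-Dedekind-finiteness(i)} and Eq.~\eqref{eq:to-divide-implies-to-be-contained}. Since $\mathcal P_{\fin,1}(H)$ is Dedekind-finite with trivial unit group, its only unit-divisor is $\{1_H\}$. And divisibility implies containment, so $X \simeq Y$ forces $X = Y$. Hence a proper divisor of $A$ is the same thing as a divisor $B \ne A$, and such a $B$ satisfies $B \subsetneq A$. Consequently an element $A \ne \{1_H\}$ is a quark iff its only divisors are $\{1_H\}$ and $A$. It is irreducible iff $A$ admits no factorization $A = XY$ with $X, Y \notin \{\{1_H\}, A\}$ (both factors automatically divide $A$).

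For \ref{thm:Pfin1-is-factorable(i)}, let $A$ be irreducible and suppose $B$ divides $A$ with $\{1_H\} \ne B \subsetneq A$. Write $A = SBT$. Among all such triples I would fix one with $|S| + |T|$ minimal; this is possible because all three sets lie inside $A$, which is finite. Then I regroup the product as $A = S \cdot (BT)$ or as $A = (SB)\cdot T$.
\begin{itemize}
\item If $S \ne \{1_H\}$ and $BT \ne A$, the first grouping contradicts irreducibility unless $S = A$. The second grouping is handled symmetrically.
\item The case $S = T = \{1_H\}$ gives $B = A$, which is excluded.
\end{itemize}
So everything reduces to the degenerate situation $A = AB'$ (or $A = B'A$) with $\{1_H\} \ne B' \subsetneq A$; for instance $B' = BT$ when $S = A$.

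This degenerate case is the main obstacle. My first attempt would be to choose a subset $C \subseteq A$ with $1_H \in C$ and $CB' = A$, of minimal cardinality. Then $C \ne \{1_H\}$, because $B' \ne A$. If $C \ne A$, the factorization $A = C \cdot B'$ has both factors proper and not unit-divisors, contradicting irreducibility. If $C = A$ is forced, I would try instead to exploit $A = AB'^{\,k}$ for all $k$ together with the finiteness of $A$. The aim would be to produce a suitable proper sub-factor, for example by replacing $B'$ with a power $B'^{\,k}$ that stabilizes, and then running the minimal-$C$ argument again on that power.

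For \ref{thm:Pfin1-is-factorable(ii)}, take an irreducible $A$ and suppose $A = XY$ with $X, Y \ne \{1_H\}$. By \ref{thm:Pfin1-is-factorable(i)}, $A$ is a quark, so $X = Y = A$ and $A = A^2$. In that case I would show that an idempotent $A$ with $|A| \ge 3$ is reducible. The idea is the same minimal-subset trick: pick $C \subseteq A$ with $1_H \in C$, $|C|$ minimal, and $CA' = A$ for a suitable proper $A' \ni 1_H$. The model case is a subgroup of order $3$ written as $\{1,x\}\{1,x\}$. This would show that every irreducible with at least three elements is an atom.

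For $A = \{1_H, x\}$, the only candidate factorization into non-identity factors is $X = Y = A$. Now $\{1_H, x\}^2 = \{1_H, x, x^2\}$, and this equals $A$ iff $x^2 \in \{1_H, x\}$. So exactly those two-element sets fail to be atoms. They are still irreducible: by Proposition~\ref{prop:two-element-irreds} they are quarks, and quarks are irreducible.
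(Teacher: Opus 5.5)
There is a genuine gap. Your preliminary reductions are sound: the only unit-divisor is $\{1_H\}$, and divisibility implies containment. From these you correctly reduce (i) to excluding $A = AB'$ (or $A = B'A$) with $\{1_H\} \subsetneq B' \subsetneq A$. Via (i) you correctly reduce (ii) to excluding $A = A^2$ with $|A| \ge 3$, and your analysis of the two-element sets $\{1_H, x\}$ is complete.

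The gap is that the two steps carrying the real content of the theorem are never proved. In (i) you explicitly leave open the case where the minimal $C$ is ``forced'' to equal $A$, and you offer only a speculative fallback via stabilizing powers of $B'$. In (ii) you invoke ``a suitable proper $A'$'' without exhibiting one. As written, neither part is established. (The paper gives no argument of its own here; it cites \cite[Proposition~4.11(iii)]{Tr20(c)} and \cite[Proposition~2.3]{Co-Tr-25(a)}.)

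The missing observation takes one line. Suppose $A = AB'$ and pick any $b \in B' \setminus \{1_H\}$. Then $Ab \subseteq AB' = A$, and $b = 1_H \cdot b$. So $C := A \setminus \{b\}$ contains $1_H$ and satisfies
$A = C \cup \{b\} \subseteq C\{1_H, b\} \subseteq CB' \subseteq AB' = A$.
Since $\{1_H\} \subsetneq B' \subsetneq A$ forces $|A| \ge 3$, we get $C \notin \{\{1_H\}, A\}$. Hence $A = C \cdot B'$ contradicts irreducibility. The case $A = B'A$ is symmetric, giving $A = B' \cdot (A \setminus \{b\})$. In particular the scenario ``$C = A$ is forced'' never occurs, so your minimal-$C$ argument does go through once this is noted.

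For (ii) no further subset trick is needed. If $A = A^2$ and $|A| \ge 3$, then for any $b \in A \setminus \{1_H\}$ we have $A \subseteq A\{1_H, b\} \subseteq A^2 = A$. Thus $\{1_H, b\}$ is a divisor of $A$ different from both $\{1_H\}$ and $A$, contradicting (i).
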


\begin{proof}
For \ref{thm:Pfin1-is-factorable(i)}, see \cite[Proposition 4.11(iii)]{Tr20(c)}. For \ref{thm:Pfin1-is-factorable(ii)}, see
\cite[Proposition~2.3]{Co-Tr-25(a)}, where it is however mistakenly stated (the proof is correct)
that $X$ is irreducible but not an atom ``if and only if
$X = \{1_H, x\}$ for some $x \in H$ such that $x^2 = 1_H$ or $x^2 = x$''
(one should add that $x \ne 1_H$).
\end{proof}

\begin{corollary}
\label{cor:atomicity}
$\mathcal P_{\fin,1}(H)$ is atomic if and only if $H$ has no unit of order two and its only idempotent is the identity $1_H$.
\end{corollary}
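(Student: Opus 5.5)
We derive the corollary directly from Theorem~\ref{thm:ACC-and-Dedekind-finiteness}\ref{thm:ACC-and-Dedekind-finiteness(i)} and Theorem~\ref{thm:Pfin1-is-factorable}\ref{thm:Pfin1-is-factorable(ii)}. By the former, $\mathcal P_{\fin,1}(H)$ is factorable and Dedekind-finite with trivial unit group. Hence $\mathcal P_{\fin,1}(H)^\sharp$ consists of all sets different from $\{1_H\}$, and every such set is a product of irreducibles. By the latter, the only irreducibles that fail to be atoms are the sets $\{1_H,x\}$ with $x \ne 1_H$ and $x^2 \in \{1_H, x\}$.

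\emph{Sufficiency.} Assume that $H$ has no unit of order two and that $1_H$ is its only idempotent. Suppose $x \ne 1_H$ and $x^2 = 1_H$. Then $x$ is a unit, with inverse $x$, and $\{x^k : k \in \mathbb N\} = \{1_H, x\}$, so $x$ is a unit of order two, which is excluded. Suppose instead $x^2 = x$. Then $x$ is a non-identity idempotent, which is also excluded. So the exceptional sets do not occur. Every irreducible is therefore an atom, and factorability becomes atomicity.

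\emph{Necessity.} Assume that $H$ has an element $x \ne 1_H$ with $x^2 = 1_H$ or $x^2 = x$; a unit of order two or a non-identity idempotent is such an element. Set $A := \{1_H, x\}$. By Proposition~\ref{prop:two-element-irreds}, $A$ is a quark, and by Theorem~\ref{thm:Pfin1-is-factorable}\ref{thm:Pfin1-is-factorable(ii)} it is not an atom; indeed $A = A^2$ is a product of two non-units. We claim that $A$ has no atomic factorization. Suppose $A = B_1 \cdots B_m$ with each $B_i$ an atom. Each $B_i$ divides $A$, so Eq.~\eqref{eq:to-divide-implies-to-be-contained} gives $B_i \subseteq A$. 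An atom is not the identity, so $B_i \ne \{1_H\}$. Since $B_i$ contains $1_H$, this forces $B_i = A$. But $A$ is not an atom, a contradiction. Hence $\mathcal P_{\fin,1}(H)$ is not atomic.

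The only step requiring any care is this necessity argument: one must observe that every divisor of $A$ lies in $\{\{1_H\}, A\}$. Once that is noted, both directions are bookkeeping with the two cited theorems.
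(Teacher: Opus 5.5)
Your proof is correct and takes the same route as the paper: combine Theorem~\ref{thm:ACC-and-Dedekind-finiteness}\ref{thm:ACC-and-Dedekind-finiteness(i)} with Theorem~\ref{thm:Pfin1-is-factorable}\ref{thm:Pfin1-is-factorable(ii)}, and observe that $\{1_H, x\}$ has an atomic factorization only if it is itself an atom. You also write out two details the paper leaves to the reader. The first is that observation, which you prove via Eq.~\eqref{eq:to-divide-implies-to-be-contained}; the second is the equivalence between ``$x \ne 1_H$ and $x^2 \in \{1_H, x\}$'' and ``$x$ is a unit of order two or a non-identity idempotent''.
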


\begin{proof}
Apply Theorems \ref{thm:ACC-and-Dedekind-finiteness}\ref{thm:ACC-and-Dedekind-finiteness(i)} and \ref{thm:Pfin1-is-factorable}\ref{thm:Pfin1-is-factorable(ii)}, upon observing that, for every non-identity $x \in H$, the set $\{1_H, x\}$ factors into a product of atoms if and only if it is an atom.
\end{proof}

Now we look for analogous characterizations in $\mathcal P_{\fin,\times}(H)$.

\begin{lemma}
\label{lem:large-irreducibles-in-Pfin_times}
Assume $H$ is Dedekind-finite, and let $A, X, Y \in \mathcal P_{\fin,\times}(H)$ satisfy $A = XAY$. If $|A| \ge 3$ and $\max(|X|, |Y|) \ge 2$, then $A$ is not irreducible in $\mathcal P_{\fin,\times}(H)$.
\end{lemma}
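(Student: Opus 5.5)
We first normalize $X$ and $Y$ so that both contain $1_H$, then exhibit an explicit factorization of $A$ into two proper divisors, using cardinality to certify properness. By symmetry we may assume $|X| \ge 2$; the case $|Y| \ge 2$ is the mirror image, with products taken on the right.

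\emph{Normalization.} Pick units $u \in X$ and $v \in Y$, which exist since $X, Y \in \mathcal P_{\fin,\times}(H)$. Then $uAv \subseteq XAY = A$. Since $|uAv| = |A|$ and $A$ is finite, this forces $uAv = A$, and hence $u^{-1}Av^{-1} = A$. Set $X_0 := Xu^{-1}$ and $Y_0 := v^{-1}Y$. Then $1_H \in X_0 \cap Y_0$, $|X_0| = |X| \ge 2$, and
\[
X_0AY_0 = X(u^{-1}Av^{-1})Y = XAY = A.
\]
Because $1_H \in Y_0$, we get $A \subseteq X_0A \subseteq X_0AY_0 = A$, so $X_0A = A$. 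Now fix $x \in X_0$ with $x \neq 1_H$ and put $B := \{1_H, x\}$. Then $A \subseteq BA \subseteq X_0A = A$, so $BA = A$; in particular $xA \subseteq A$.

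\emph{Main step: shrinking the right factor.} We look for some $a \in A$ such that $a = xc$ for some $c \in A$ with $c \neq a$. Given such an $a$, set $C := A \setminus \{a\}$. We claim $BC = C \cup xC = A$. Indeed, $C \cup xC \subseteq A$ because $xA \subseteq A$, and $a = xc \in xC$. Suppose no such $a$ exists, i.e., $xc = a$ with $c, a \in A$ always forces $c = a$. Applying this with $a := xc$ gives $xc = c$ for every $c \in A$. Since $A \in \mathcal P_{\fin,\times}(H)$, it contains a unit $w$, and then $xw = w$ yields $x = 1_H$, a contradiction. So the required $a$ exists.

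\emph{Properness via cardinality.} It remains to check that $A = BC$ expresses $A$ as a product of two proper divisors that are not units; this is where $|A| \ge 3$ is used.
\begin{itemize}
\item $B$ and $C$ both divide $A$, since $A = BC$ (multiply by $\{1_H\}$ on the missing side).
\item Neither is a unit, by Theorem~\ref{thm:divisor-closedness}\ref{thm:divisor-closedness(i)}, because $|B| = 2$ and $|C| = |A| - 1 \ge 2$.
\item $A$ divides neither of them. By Eq.~\eqref{equ:increasing-size}, any multiple of $A$ in $\mathcal P_{\fin,\times}(H)$ has at least $|A|$ elements, whereas $|B| = 2 < |A|$ and $|C| < |A|$.
\end{itemize}
By Theorem~\ref{thm:divisor-closedness}\ref{thm:divisor-closedness(iii)}, $\mathcal P_{\fin,\times}(H)$ is Dedekind-finite, so its unit-divisors are exactly its units. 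Hence $B$ and $C$ are proper divisors of $A$ that are not unit-divisors, and $A$ is not irreducible.

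The only delicate point is the main step: finding an element of $A$ that can be dropped while keeping $BC = A$. The trick that resolves it is that a pointwise-fixing action of $x$ on $A$ is ruled out by the unit contained in $A$.
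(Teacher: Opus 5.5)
Your strategy matches the paper's. You use units in $X$ and $Y$ to get $uAv = A$, take a second element $x$ of (the normalized) $X$, and delete one element of $A$ that is recovered as $x$ times another element of $A$. Properness then follows from cardinality via Eq.~\eqref{equ:increasing-size}. Your normalization is slightly cleaner than the paper's, because you pick the units in $X$ and $Y$ directly instead of extracting them from $a = ubv$ via Dedekind-finiteness.

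There is one step you never verify: that $C = A \setminus \{a\}$ lies in $\mathcal P_{\fin,\times}(H)$, i.e., that it still contains a unit of $H$. This is not cosmetic. Every claim in your last paragraph presupposes $C \in \mathcal P_{\fin,\times}(H)$: that $A = BC$ is a factorization in $\mathcal P_{\fin,\times}(H)$, that $C$ divides $A$ there, that $C$ is not a unit by Theorem~\ref{thm:divisor-closedness}\ref{thm:divisor-closedness(i)}, and that $A$ does not divide $C$ by Eq.~\eqref{equ:increasing-size}. Yet your $a$ is an \emph{arbitrary} element with $a = xc$ for some $c \in A \setminus \{a\}$. As written, nothing prevents $a$ from being the only unit of $A$.

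There are two easy repairs.

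\emph{Make the choice explicit.} Your own contradiction argument already points to it. Take a unit $w \in A$ and set $c := w$, $a := xw \in A$. Then $a \ne w$ because $x \ne 1_H$, so $w \in C$. This is exactly what the paper does: it starts from a unit $a \in A$, writes $xa = ua'$, deletes $a'$, and observes that the unit $a$ survives the deletion. With this choice your argument never actually needs $H$ to be Dedekind-finite.

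\emph{Keep your arbitrary $a$ and use the Dedekind-finiteness of $H$,} which your write-up otherwise never invokes. If $a$ is a unit and $a = xc$, then $x(ca^{-1}) = 1_H$, so $(ca^{-1})x = 1_H$ as well. Hence $x$ is a unit, and so is $c = x^{-1}a$, which lies in $C$ since $c \ne a$. If $a$ is not a unit, $C$ retains all the units of $A$.

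With either addition, the proof is complete.
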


\begin{proof}
Assume without loss of generality that $|X| \ge 2$ (the case $|Y| \ge 2$ is symmetric). Pick a unit $a \in A$. By the hypothesis that $A = XAY$, there exist $u \in X$, $b \in A$, and $v \in Y$ such that $a = ubv$, which, by the Dedekind-finiteness of $H$, is only possible if $u, v \in H^\times$. Since $uAv \subseteq A$ and $|uAv| = |A|$, it follows that $A = uAv$. 

Now, using that $|X| \ge 2$, pick an element $x \in X$ with $x \ne u$. We gather from the above that $xav \in XAY = A = uAv$. Therefore, $xav = ua'v$ for some $a' \in A$, and hence $xa = ua'$ (because $v$ is a unit and units are cancellative elements). It is then clear that $a \ne a'$; otherwise, $a'$ is a unit, and $xa = ua'$ yields $x = u$ (a contradiction). 

Set $B := A \setminus \{a'\}$. From the preceding considerations, it is
immediate that
\[
\begin{split}
A &= (A \setminus \{ua'v\}) \cup \{ua'v\}
   = (uAv \setminus \{ua'v\}) \cup \{xav\} \\
  &\subseteq uBv \cup xBv
   = \{u,x\}Bv
   \subseteq XAY = A.
\end{split}
\]
Hence $A = \{u,x\}Bv$, which implies that $A$ is not irreducible in
$\mathcal P_{\fin,\times}(H)$. 

Indeed, both $\{u,x\}$ and $Bv$ contain a unit of $H$
(in particular, $av \in Bv \cap H^\times$) and are therefore divisors
of $A$ in $\mathcal P_{\fin,\times}(H)$. Yet, by
Eq.~\eqref{equ:increasing-size}, neither $\{u,x\}$ nor $Bv$
is divisible by $A$, since $|\{u,x\}| = 2 < |A|$ and
$|Bv| = |B| = |A| - 1$.
\end{proof}

\begin{proposition}
\label{prop:atoms-of-P_fin_times}
If $H$ is Dedekind-finite, then every irreducible of $\mathcal P_{\fin,\times}(H)$ with at least three elements is an atom. 
\end{proposition}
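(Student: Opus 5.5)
Let $A$ be an irreducible of $\mathcal P_{\fin,\times}(H)$ with $|A| \ge 3$. We argue by contradiction: suppose $A = XY$ with $X, Y \in \mathcal P_{\fin,\times}(H)$ both non-units. By Theorem~\ref{thm:divisor-closedness}\ref{thm:divisor-closedness(i)}, this means $|X| \ge 2$ and $|Y| \ge 2$. Since $\mathcal P_{\fin,\times}(H)$ is Dedekind-finite by Theorem~\ref{thm:divisor-closedness}\ref{thm:divisor-closedness(iii)}, its unit-divisors are exactly the units. Hence $X$ and $Y$ are divisors of $A$ that are not unit-divisors.

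Irreducibility of $A$ forbids both $X$ and $Y$ from being proper divisors. So at least one of them, say $X$ (the case of $Y$ being symmetric), is divisible by $A$, i.e.\ $X = SAT$ for some $S, T \in \mathcal P_{\fin,\times}(H)$. By Eq.~\eqref{equ:increasing-size} we then have $|X| \ge |A| \ge |X|$, so $|X| = |A|$.

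Substituting gives $A = XY = SATY$. Set $X' := S$ and $Y' := TY$, so that $A = X'AY'$. Since $|TY| \ge |Y| \ge 2$, we have $\max(|X'|, |Y'|) \ge 2$. Together with $|A| \ge 3$ and the Dedekind-finiteness of $H$, Lemma~\ref{lem:large-irreducibles-in-Pfin_times} applies and shows that $A$ is not irreducible, a contradiction. In the symmetric case $Y = SAT$, we use $A = (XS)AT$ instead, with $|XS| \ge |X| \ge 2$. Therefore no such factorization $A = XY$ exists, and $A$ is an atom. Note that $A$ is a non-unit, being irreducible.

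The only point requiring care is making sure that the definition of irreducible rules out a factorization into two non-proper divisors. It does: irreducibility only forbids factorizations in which both factors are proper divisors, so the case analysis above, where at least one factor is divisible by $A$, is exactly the remaining situation. That case is then handled entirely by the lemma. The hypothesis that $H$ is Dedekind-finite is needed only to invoke Lemma~\ref{lem:large-irreducibles-in-Pfin_times}.
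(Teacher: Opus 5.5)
Your proof is correct and is essentially the paper's argument: from $X = SAT$ you get $A = SA(TY)$, and Lemma~\ref{lem:large-irreducibles-in-Pfin_times} combined with Eq.~\eqref{equ:increasing-size} forces $|Y| = 1$. You phrase this as a contradiction with $|TY| \ge |Y| \ge 2$, whereas the paper reads off $|S| = |TY| = 1$ directly. Your explicit justification that one of $X, Y$ must be divisible by $A$, via the Dedekind-finiteness of $\mathcal P_{\fin,\times}(H)$, is a welcome detail the paper leaves implicit; the computation $|X| = |A|$ is harmless but unused.
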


\begin{proof}
Assume $A = XY$ for some $X, Y \in \mathcal P_{\fin,\times}(H)$. We need to show that $X$ or $Y$ is a unit. Since $A$ is irreducible, either $X$ or $Y$ is divisible by $A$. Suppose we are in the former case (the other being analogous). There then exist $S, T \in \mathcal P_{\fin,\times}(H)$ such that $X = SAT$ and hence $A = XY = SATY$. By Lemma \ref{lem:large-irreducibles-in-Pfin_times}, this is only possible if $|S| = |TY| = 1$, which, by Eq.~\eqref{equ:increasing-size}, yields $|Y| = 1$ and hence proves, by Theorem \ref{thm:divisor-closedness}\ref{thm:divisor-closedness(i)}, that $Y$ is a unit (as desired).
\end{proof}

\begin{proposition}
\label{prop:irreducibles-of-Pfin_times(1)}
Let $A$ be an irreducible of $\mathcal P_{\fin,\times}(H)$, and suppose that $1_H \in uAv$ for some $u, v \in H^\times$. If $H$ is Dedekind-finite, then $uAv$ is irreducible in $\mathcal P_{\fin,1}(H)$.
\end{proposition}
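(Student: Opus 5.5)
The plan is to transfer irreducibility from $A$ to $B := uAv$ inside $\mathcal P_{\fin,\times}(H)$, and then pass from $\mathcal P_{\fin,\times}(H)$ down to $\mathcal P_{\fin,1}(H)$ by a cardinality argument based on Eqs.~\eqref{eq:to-divide-implies-to-be-contained} and \eqref{equ:increasing-size}.

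First, $\{u\}$ and $\{v\}$ are units of $\mathcal P_{\fin,\times}(H)$ by Theorem~\ref{thm:divisor-closedness}\ref{thm:divisor-closedness(i)}, so $B \simeq A$ in $\mathcal P_{\fin,\times}(H)$. Divisibility relations with $B$ therefore coincide with those with $A$, and $B$ is irreducible in $\mathcal P_{\fin,\times}(H)$. Concretely, if $B = XY$ with $X, Y$ proper divisors of $B$ that are not unit-divisors, then
\[
A = (u^{-1}X)(Y v^{-1})
\]
is a factorization of the same forbidden kind. Also $B \in \mathcal P_{\fin,1}(H)$ because $1_H \in uAv$. Moreover $B \ne \{1_H\}$: otherwise $A = \{u^{-1}v^{-1}\}$ would be a unit of $\mathcal P_{\fin,\times}(H)$, contradicting $A \in \mathcal P_{\fin,\times}(H)^\sharp$. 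Since $\mathcal P_{\fin,1}(H)$ is Dedekind-finite with trivial unit group (Theorem~\ref{thm:ACC-and-Dedekind-finiteness}\ref{thm:ACC-and-Dedekind-finiteness(i)}), $B$ lies in $\mathcal P_{\fin,1}(H)^\sharp$. It then suffices to rule out a factorization $B = XY$ with $X, Y \in \mathcal P_{\fin,1}(H)$ both non-identity proper divisors of $B$ in $\mathcal P_{\fin,1}(H)$.

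Suppose such a factorization exists. Then $X$ and $Y$ are divisors of $B$ in $\mathcal P_{\fin,\times}(H)$ as well. They are not unit-divisors there: $\mathcal P_{\fin,\times}(H)$ is Dedekind-finite by Theorem~\ref{thm:divisor-closedness}\ref{thm:divisor-closedness(iii)}, so its unit-divisors are the singletons $\{w\}$ with $w \in H^\times$, whereas $X$ and $Y$ contain $1_H$ and are different from $\{1_H\}$, so they have at least two elements. By irreducibility of $B$ in $\mathcal P_{\fin,\times}(H)$, $B$ divides $X$ or $Y$ there, say $X = SBT$ with $S, T \in \mathcal P_{\fin,\times}(H)$.

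The key step, which I expect to be the only real point, is to bring this divisibility back into $\mathcal P_{\fin,1}(H)$, where $S$ and $T$ need not live. Since $X$ divides $B$ in $\mathcal P_{\fin,1}(H)$, Eq.~\eqref{eq:to-divide-implies-to-be-contained} gives $X \subseteq B$. On the other hand, Eq.~\eqref{equ:increasing-size} gives $|X| = |SBT| \ge |B|$. Together these force $X = B$, so $B$ trivially divides $X$ in $\mathcal P_{\fin,1}(H)$, contradicting that $X$ is a proper divisor of $B$. The case where $B$ divides $Y$ is symmetric. Hence $B$ is irreducible in $\mathcal P_{\fin,1}(H)$.

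The Dedekind-finiteness of $H$ is used only through the unit-divisors of the power monoids (Theorems~\ref{thm:ACC-and-Dedekind-finiteness} and \ref{thm:divisor-closedness}). I would double-check at the end that no further appeal to it is needed.
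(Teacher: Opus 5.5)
Your proof is correct, and it takes a genuinely different route from the paper's.

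The paper splits on $|A|$. For $|A|=2$ it invokes Proposition~\ref{prop:two-element-irreds}. For $|A|\ge 3$ it uses Proposition~\ref{prop:atoms-of-P_fin_times} to upgrade $A$ to an atom of $\mathcal P_{\fin,\times}(H)$; that proposition rests on Lemma~\ref{lem:large-irreducibles-in-Pfin_times}, which is where the Dedekind-finiteness of $H$ enters. It then cites the external fact that unit-associates of atoms are atoms, and finally notes that an atom of $\mathcal P_{\fin,\times}(H)$ containing $1_H$ is an atom of $\mathcal P_{\fin,1}(H)$.

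You instead transfer irreducibility itself, rather than atomicity, along $A \simeq uAv$. You then descend from $\mathcal P_{\fin,\times}(H)$ to $\mathcal P_{\fin,1}(H)$ by the squeeze $X \subseteq B$ and $|X| = |SBT| \ge |B|$, which needs only Eqs.~\eqref{eq:to-divide-implies-to-be-contained} and~\eqref{equ:increasing-size}. This avoids the case split, the lemma and the outside reference. The cost is that for $|A|\ge 3$ you get only irreducibility, not the atom property, though the latter follows afterwards from Theorem~\ref{thm:Pfin1-is-factorable}\ref{thm:Pfin1-is-factorable(ii)}.

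Your closing double-check comes out in your favour. Every fact you invoke holds for an arbitrary monoid $H$:
\begin{enumerate}[label=(\arabic*)]
\item $\mathcal P_{\fin,1}(H)$ is Dedekind-finite with trivial unit group.
\item $\mathcal P_{\fin,\times}(H)$ is Dedekind-finite, and its units are the singletons $\{w\}$ with $w\in H^\times$.
\item Eq.~\eqref{equ:increasing-size} holds.
\end{enumerate}
So your argument never uses the Dedekind-finiteness of $H$. Your remark that this hypothesis is used ``through the unit-divisors of the power monoids'' is therefore inaccurate, and your proof actually establishes the proposition for every monoid $H$. The paper's argument, as written, does not give this, since it needs the hypothesis in Lemma~\ref{lem:large-irreducibles-in-Pfin_times}.
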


\begin{proof}
If $|A| = 2$, then $|uAv| = 2$, and we know from Proposition 
\ref{prop:two-element-irreds} that every two-element set in 
$\mathcal P_{\fin,1}(H)$ is irreducible.

Assume $|A| \ge 3$, and suppose that $H$ is Dedekind-finite. Then, by Proposition \ref{prop:atoms-of-P_fin_times}, 
$A$ is an atom of $\mathcal P_{\fin,\times}(H)$, and hence, by 
\cite[Proposition~3.1(iii)]{An-Tr18}, the same is true of $uAv$. 
Since $1_H \in uAv$ (by hypothesis), it follows that $uAv$ is also an atom 
(and therefore an irreducible) of $\mathcal P_{\fin,1}(H)$.
\end{proof}

\begin{proposition}
\label{prop:irreducibles-of-Pfin_times(2)}
If $H$ is Dedekind-finite, then $uAv$ is an irreducible of $\mathcal P_{\fin,\times}(H)$ for all $u, v \in H^\times$ and every irreducible $A$ of $\mathcal P_{\fin,1}(H)$.
\end{proposition}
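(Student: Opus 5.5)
Set $B := uAv$, with $u,v \in H^\times$ and $A$ irreducible in $\mathcal P_{\fin,1}(H)$. The plan is to split according to $|A|$ and reduce to earlier results in each case.

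\emph{Preliminary facts.} Since $A$ is irreducible, $A \ne \{1_H\}$, so $|A| \ge 2$. Because $u$ and $v$ are units of $H$, we have $|B| = |A|$. Moreover, $B$ contains the unit $uv$, so $B \in \mathcal P_{\fin,\times}(H)$.

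\emph{Case $|A| = 2$.} Here $B$ is a two-element set $\{uv, y\}$ with $uv \ne y$. By Proposition~\ref{prop:two-element-irreds}, $B$ is a quark of $\mathcal P_{\fin,\times}(H)$, and hence an irreducible.

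\emph{Case $|A| \ge 3$.} By Theorem~\ref{thm:Pfin1-is-factorable}\ref{thm:Pfin1-is-factorable(ii)}, $A$ is an atom of $\mathcal P_{\fin,1}(H)$. Since atoms are irreducible, it suffices to show that $A$ is an atom of $\mathcal P_{\fin,\times}(H)$. The step to $uAv$ then follows from \cite[Proposition~3.1(iii)]{An-Tr18}, exactly as in the proof of Proposition~\ref{prop:irreducibles-of-Pfin_times(1)}. Alternatively, one checks directly that $B = XY$ gives $A = (u^{-1}X)(Yv^{-1})$.

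The main step is to show that an atom of $\mathcal P_{\fin,1}(H)$ stays an atom in the larger monoid $\mathcal P_{\fin,\times}(H)$. Suppose $A = XY$ with $X, Y \in \mathcal P_{\fin,\times}(H)$. Since $1_H \in XY$, we can write $1_H = xy$ with $x \in X$ and $y \in Y$. Dedekind-finiteness of $H$ gives $yx = 1_H$, so $x$ and $y$ are units with $y = x^{-1}$. Now set
\[
X' := x^{-1}X \quad\text{and}\quad Y' := Yx.
\]
Both contain $1_H$, and $X'Y' = x^{-1}Ax$.

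The obstacle is that $x^{-1}Ax$ need not equal $A$. I would get around it by conjugating once more:
\[
A = (xX')(Y'x^{-1}) = \bigl(xX'x^{-1}\bigr)\bigl(xY'x^{-1}\bigr).
\]
This only rewrites $A = XY$, so I need a factorization inside $\mathcal P_{\fin,1}(H)$ instead. Note that $\mathcal P_{\fin,1}(H)$ is closed under conjugation by units, and conjugation $Z \mapsto x^{-1}Zx$ is an automorphism of $\mathcal P_{\fin,1}(H)$. Hence $x^{-1}Ax$ is an atom of $\mathcal P_{\fin,1}(H)$. From $x^{-1}Ax = X'Y'$ with $X', Y' \in \mathcal P_{\fin,1}(H)$, one of $X'$ or $Y'$ must be $\{1_H\}$. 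It follows that $X = \{x\}$ or $Y = \{x^{-1}\}$, which is a unit of $\mathcal P_{\fin,\times}(H)$ by Theorem~\ref{thm:divisor-closedness}\ref{thm:divisor-closedness(i)}.

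So $A$ is an atom of $\mathcal P_{\fin,\times}(H)$, and therefore $B$ is an atom and in particular an irreducible, which proves the proposition.
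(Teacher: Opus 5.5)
Your proof is correct, but it takes a different route from the paper's. The paper neither splits by $|A|$ nor passes through atoms. Given $uAv = XY$ in $\mathcal P_{\fin,\times}(H)$, it picks $x \in X$ and $y \in Y$ with $u^{-1}xyv^{-1} = 1_H$, and uses Dedekind-finiteness to see that $q := u^{-1}x$ is a unit. It then rewrites $A = (u^{-1}Xq^{-1})(qYv^{-1})$ as a product of two elements of $\mathcal P_{\fin,1}(H)$. Irreducibility of $A$ there says that $A$ divides one factor, say $u^{-1}Xq^{-1} = SAT$ with $S, T \in \mathcal P_{\fin,1}(H)$. This transports back to $X = (uSu^{-1})(uAv)(v^{-1}Tq)$, so $uAv$ divides a factor of any factorization of itself.

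That argument is uniform in $|A|$ and uses only the definition of irreducibility. Yours instead relies on the cited fact, Theorem~\ref{thm:Pfin1-is-factorable}\ref{thm:Pfin1-is-factorable(ii)}, that irreducibles of size at least three are atoms, plus a separate treatment of two-element sets via Proposition~\ref{prop:two-element-irreds}. In exchange, your route shows directly that $uAv$ is an atom of $\mathcal P_{\fin,\times}(H)$ when $|A| \ge 3$ and a quark in every case. It also isolates a clean transfer principle: for Dedekind-finite $H$, atoms of $\mathcal P_{\fin,1}(H)$ remain atoms of $\mathcal P_{\fin,\times}(H)$.

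One small correction inside your main step. You say that $A = (Xx^{-1})(xY)$ ``only rewrites $A = XY$'', but that is mistaken. Since $xy = 1_H$, both $Xx^{-1}$ and $xY$ contain $1_H$, so this is already a factorization inside $\mathcal P_{\fin,1}(H)$. The detour through the conjugate $x^{-1}Ax$ and the inner automorphism is therefore unnecessary, although it is valid. Normalizing on that side is exactly what the paper does with its unit $q$.
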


\begin{proof}
Let $A$ be an irreducible of $\mathcal P_{\fin,1}(H)$, and suppose that $uAv = XY$ for some $u, v \in H^\times$ and $X, Y \in \mathcal P_{\fin,\times}(H)$. We claim that 
\[
uAv \text{ divides either }
X \text{ or } Y 
\text{ in }
\mathcal P_{\fin,\times}(H).
\]

To begin, $A = \allowbreak u^{-1}XYv^{-1}$, and thus
$1_H = u^{-1}xyv^{-1}$ for some $x \in X$ and $y \in Y$.  
Since $H$ is Dedekind-finite, $q := u^{-1}x \in u^{-1}X$ is a unit of $H$, with inverse 
$q^{-1} = \allowbreak yv^{-1} \in Yv^{-1}$. Consequently, we can write $A = X'Y'$, where
\[
X' := u^{-1}Xq^{-1} \in \mathcal P_{\fin,1}(H)
\quad\text{and}\quad
Y' := qYv^{-1} \in \mathcal P_{\fin,1}(H).
\]
By the hypothesis that $A$ is irreducible in $\mathcal P_{\fin,1}(H)$, either $X'$ or $Y'$ is divided by $A$ in the same monoid. Without loss of generality, suppose we are in the former case. There then exist $S, \allowbreak T \in \allowbreak \mathcal P_{\fin,1}(H)$ such that $SAT = X' = u^{-1}Xq^{-1}$, and hence
\[
X = (uSu^{-1})(uAv)(v^{-1}Tq).
\]
This shows that $uAv$ divides $X$ in 
$\mathcal P_{\fin,\times}(H)$, as desired. Since $|uAv| = |A| \ge 2$, 
it follows from Theorem \ref{thm:divisor-closedness}\ref{thm:divisor-closedness(i)} 
that $uAv$ is an irreducible element of $\mathcal P_{\fin,\times}(H)$.
\end{proof}

\begin{corollary}
If $H$ is Dedekind-finite, every irreducible of $\mathcal P_{\fin,\times}(H)$ is a quark.
\end{corollary}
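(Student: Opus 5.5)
Let $A$ be an irreducible of $\mathcal P_{\fin,\times}(H)$. The plan is to move $A$ into $\mathcal P_{\fin,1}(H)$, where Theorem~\ref{thm:Pfin1-is-factorable}\ref{thm:Pfin1-is-factorable(i)} tells us that irreducibles are quarks, and then transport the quark property back.

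\emph{Normalization.} Since $A \in \mathcal P_{\fin,\times}(H)$, pick a unit $a \in A$ and set $u := a^{-1}$, $v := 1_H$. Then $1_H \in uAv$, and Proposition~\ref{prop:irreducibles-of-Pfin_times(1)} (using Dedekind-finiteness) shows that $A' := uA$ is irreducible in $\mathcal P_{\fin,1}(H)$. By Theorem~\ref{thm:Pfin1-is-factorable}\ref{thm:Pfin1-is-factorable(i)}, $A'$ is a quark of $\mathcal P_{\fin,1}(H)$. Since $A$ and $A'$ are associated by units, they divide each other in $\mathcal P_{\fin,\times}(H)$. Hence it suffices to show that $A'$ is a quark of $\mathcal P_{\fin,\times}(H)$. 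Note that $A'$ is not a unit-divisor there: it is irreducible by Proposition~\ref{prop:irreducibles-of-Pfin_times(2)}, or alternatively $|A'| \ge 2$, whereas by Theorem~\ref{thm:divisor-closedness}\ref{thm:divisor-closedness(i)},\ref{thm:divisor-closedness(iii)} the unit-divisors of $\mathcal P_{\fin,\times}(H)$ are the unit singletons.

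\emph{Transport of divisors.} Let $X$ divide $A'$ in $\mathcal P_{\fin,\times}(H)$ with $X$ not a unit, say $A' = SXT$. I mimic the proof of Proposition~\ref{prop:irreducibles-of-Pfin_times(2)}. Since $1_H \in A'$, we can write $1_H = sxt$ with $s\in S$, $x \in X$, $t \in T$. Dedekind-finiteness of $H$ (a product of elements equal to $1_H$ forces every factor to be a unit) then gives that $s$, $x$, $t$ are all units. Define
\[
S' := Ss^{-1}, \qquad X' := sXts, \qquad T' := s^{-1}t^{-1}s^{-1}T .
\]
Rather than fuss over the exact placement of inverses, the aim is simply to rewrite $A' = S'X'T'$ with each factor containing $1_H$. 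This works by conjugating with $s$ and $t$: take $S' = Ss^{-1}$, $X' = sXt$, $T' = t^{-1}T$. Then $1_H \in S'$, $1_H = sxt \in X'$, and $1_H \in T'$, and the product telescopes back to $SXT = A'$. So $X'$ divides $A'$ in $\mathcal P_{\fin,1}(H)$. Moreover, $X'$ is not a unit-divisor there, since $|X'| = |X| \ge 2$ and the unit group of $\mathcal P_{\fin,1}(H)$ is trivial (Theorem~\ref{thm:ACC-and-Dedekind-finiteness}).

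\emph{Conclusion.} Because $A'$ is a quark of $\mathcal P_{\fin,1}(H)$, it follows that $A'$ divides $X'$ there, say $X' = PA'Q$ with $P, Q \in \mathcal P_{\fin,1}(H)$. Then
\[
X = s^{-1}PA'Qt^{-1},
\]
so $A'$ divides $X$ in $\mathcal P_{\fin,\times}(H)$. This proves that $A'$, and hence $A$, is a quark of $\mathcal P_{\fin,\times}(H)$.

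The only delicate point is the second step: extracting units $s, t$ from a factorization $A' = SXT$ with $1_H \in A'$. That requires Dedekind-finiteness in the form that $sxt = 1_H$ forces $s$, $x$, $t$ to be units. This is exactly the fact recalled in the proof of Theorem~\ref{thm:divisor-closedness}\ref{thm:divisor-closedness(ii)}, so I expect no real obstacle.
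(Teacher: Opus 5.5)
Your proof is correct, but it takes a longer route than the paper. The paper argues by cardinality. An irreducible $A$ of $\mathcal P_{\fin,\times}(H)$ is not a unit, so $|A| \ge 2$ by Theorem~\ref{thm:divisor-closedness}\ref{thm:divisor-closedness(i)}. If $|A| = 2$, it is a quark by Proposition~\ref{prop:two-element-irreds}. If $|A| \ge 3$, it is an atom by Proposition~\ref{prop:atoms-of-P_fin_times}, hence a quark by Proposition~\ref{prop:atoms-are-quarks}.

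You instead normalize $A$ into $\mathcal P_{\fin,1}(H)$, apply Proposition~\ref{prop:irreducibles-of-Pfin_times(1)} and Theorem~\ref{thm:Pfin1-is-factorable}\ref{thm:Pfin1-is-factorable(i)} there, and then transport divisibility back by hand. The proof of Proposition~\ref{prop:irreducibles-of-Pfin_times(1)} already rests on Propositions~\ref{prop:two-element-irreds} and~\ref{prop:atoms-of-P_fin_times}, plus an outside result that unit translates of atoms are atoms. So your route uses strictly more machinery, including a second cited theorem, to reach the same conclusion.

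What your route buys is a reusable transfer principle, the quark analogue of Proposition~\ref{prop:irreducibles-of-Pfin_times(2)}. For Dedekind-finite $H$, any non-unit divisor $X$ in $\mathcal P_{\fin,\times}(H)$ of a set $B \ni 1_H$ becomes, after the unit shift $X \mapsto sXt$, a non-unit-divisor divisor of $B$ in $\mathcal P_{\fin,1}(H)$. Hence quarks of $\mathcal P_{\fin,1}(H)$ stay quarks in $\mathcal P_{\fin,\times}(H)$. This makes explicit why the two monoids share their divisor structure, which the cardinality argument does not.

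One cleanup: delete your first displayed choice $S' := Ss^{-1}$, $X' := sXts$, $T' := s^{-1}t^{-1}s^{-1}T$. It is wrong: the product telescopes to $SXs^{-1}T$, not $SXT$, and $sxts = s$, so it does not put $1_H$ into $X'$. The corrected choice $S' = Ss^{-1}$, $X' = sXt$, $T' = t^{-1}T$ that follows it is the right one, and the rest of the argument goes through with it.
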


\begin{proof}
This is a direct consequence of Propositions \ref{prop:atoms-are-quarks}, \ref{prop:two-element-irreds}, and \ref{prop:atoms-of-P_fin_times}.
\end{proof}

\subsection{Finale with lengths.} Another reason why most work on the arithmetic of (finitary) power monoids has focused so far on $\mathcal P_{\fin,1}(H)$ is that, when $H$ is Dedekind-finite, $\mathcal P_{\fin,1}(H)$ shares many arithmetic features with $\mathcal P_{\fin,\times}(H)$ and, by Theorem~\ref{thm:divisor-closedness}\ref{thm:divisor-closedness(ii)}, the latter is divisor-closed in $\mathcal P_{\fin}(H)$. Details are worked out below.

\begin{theorem}
If $H$ is Dedekind-finite, then $\mathcal P_{\mathrm{fin},1}(H)$ has the same system of lengths as $\mathcal P_{\fin,\times}(H)$.
\end{theorem}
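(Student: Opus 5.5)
We show both inclusions $\mathscr L(\mathcal P_{\fin,1}(H)) \subseteq \mathscr L(\mathcal P_{\fin,\times}(H))$ and conversely, by comparing length sets of corresponding elements. For $X \in \mathcal P_{\fin,\times}(H)$, fix a unit $w \in X$ and set $X' := w^{-1}X \in \mathcal P_{\fin,1}(H)$. The plan is to prove
\[
\mathsf L_{\mathcal P_{\fin,\times}(H)}(X) = \mathsf L_{\mathcal P_{\fin,\times}(H)}(X') = \mathsf L_{\mathcal P_{\fin,1}(H)}(X').
\]
Every element of $\mathcal P_{\fin,1}(H)$ arises as such an $X'$ (take $w = 1_H$), so this gives equality of the two systems.

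The first equality is routine. Left multiplication by the unit $\{w\}$ (a unit by Theorem~\ref{thm:divisor-closedness}\ref{thm:divisor-closedness(i)}) carries factorizations $A_1 \ast \cdots \ast A_m$ of $X'$ to factorizations $(wA_1)\ast A_2 \ast\cdots\ast A_m$ of $X$, and back. Here $wA_1 \simeq A_1$, so irreducibility is preserved.

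For the second equality, the unit-divisors and the identity are dealt with first. Both monoids are Dedekind-finite (Theorems~\ref{thm:ACC-and-Dedekind-finiteness} and~\ref{thm:divisor-closedness}\ref{thm:divisor-closedness(iii)}). Hence $X' = \{1_H\}$ exactly when $X$ is a unit, and in that case both length sets are $\{0\}$ or $\emptyset$ consistently.

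Now let $X' \ne \{1_H\}$.

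\emph{Inclusion $\supseteq$.} If $X' = A_1\cdots A_m$ with each $A_i$ irreducible in $\mathcal P_{\fin,1}(H)$, then by Proposition~\ref{prop:irreducibles-of-Pfin_times(2)} (with $u = v = 1_H$) each $A_i$ is irreducible in $\mathcal P_{\fin,\times}(H)$. So $m$ is a length of $X'$ in $\mathcal P_{\fin,\times}(H)$.

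\emph{Inclusion $\subseteq$.} Suppose $X' = A_1 \cdots A_m$ with each $A_i$ irreducible in $\mathcal P_{\fin,\times}(H)$. We "normalize" by a telescoping product of units.
\begin{itemize}
\item Since $1_H \in A_1\cdots A_m$, there are $a_i \in A_i$ with $a_1\cdots a_m = 1_H$.
\item Dedekind-finiteness of $H$ forces every $a_i$ to be a unit, using the characterization that a product is a unit iff each factor is.
\item Set $u_0 := 1_H$ and $u_i := (a_1\cdots a_i)^{-1}$, so that $u_m = 1_H$, and define $B_i := u_{i-1}^{-1} A_i u_i$.
\item Then $1_H = a_1\cdots a_{i-1}\, a_i\, (a_1\cdots a_i)^{-1} \in B_i$, so $B_i \in \mathcal P_{\fin,1}(H)$.
\item The product telescopes: $B_1\cdots B_m = u_0^{-1}A_1\cdots A_m u_m = X'$.
\end{itemize}
By Proposition~\ref{prop:irreducibles-of-Pfin_times(1)}, each $B_i$ is irreducible in $\mathcal P_{\fin,1}(H)$. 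Therefore $m \in \mathsf L_{\mathcal P_{\fin,1}(H)}(X')$.

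The main obstacle is the inclusion $\subseteq$, specifically producing the elements $a_i$ and checking that $1_H \in B_i$. One must make sure that Dedekind-finiteness really yields that each $a_i$ is a unit, and that the conjugating units are placed in the correct order, since $H$ need not be commutative. Everything else is carried by the two propositions relating irreducibles of the two monoids.
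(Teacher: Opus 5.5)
Your proposal is correct and is essentially the paper's proof: one inclusion comes from the same telescoping normalization by the units $a_1\cdots a_i$ combined with Proposition~\ref{prop:irreducibles-of-Pfin_times(1)}, and the other from Proposition~\ref{prop:irreducibles-of-Pfin_times(2)}; the only cosmetic difference is that you first translate $X$ to $w^{-1}X$ inside $\mathcal P_{\fin,\times}(H)$, whereas the paper compares $X$ directly with $Xu^{-1}$. One bookkeeping remark: for a unit $X=\{w\}$ with $w\ne 1_H$ your chain of equalities actually fails ($\emptyset$ versus $\{0\}$), which is harmless only because $\emptyset$ is discarded from systems of lengths and $\{0\}$ belongs to both.
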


\begin{proof}
Fix a non-unit $X \in \mathcal P_{\fin,\times}(H)$, pick  $u \in X \cap H^\times$, and set $X' := Xu^{-1}$. By Theorem \ref{thm:divisor-closedness}\ref{thm:divisor-closedness(i)}, $\{1_H\} \ne X' \in \mathcal P_{\fin,1}(H)$. Let $L$ be the length set of $X$ in
$\mathcal P_{\fin,\times}(H)$, and $L'$ be the length set of $X'$ in $\mathcal P_{\fin,1}(H)$. We claim that $L = L'$.

To begin, let $A_1 \ast \cdots \ast A_n$ be a length-$n$ factorization of $X$ in $\mathcal P_{\fin,\times}(H)$. Then, for each $i \in \llb 1, n \rrb$ there exists $u_i \in A_i \cap H^\times$ such that $u = u_1 \cdots u_n$; accordingly, define $v_i := u_1 \cdots u_i$ and $B_i := v_{i-1} A_i v_i^{-1}$, where $v_0 := 1_H$.  
We have from Proposition \ref{prop:irreducibles-of-Pfin_times(1)} that any $B_i$ is an irreducible of
$\mathcal P_{\fin,1}(H)$; in particular, note that 
\[
1_H = v_{i-1}u_iv_i^{-1} \in B_i.
\]
Since $B_1 \cdots B_n = v_0 A_1 \cdots A_n u^{-1} = X'$, it follows that $B_1 \ast  \allowbreak \cdots \ast \allowbreak B_n$ is a length-$n$ factorization of $X'$ in $\mathcal P_{\fin,1}(H)$, thereby yielding $L \subseteq L'$.

The reverse inclusion $L' \subseteq L$ is immediate: if
$C_1 \ast \cdots \ast C_m$ is a length-$m$ factorization of $X'$ in $\mathcal P_{\fin,1}(H)$,
then Proposition \ref{prop:irreducibles-of-Pfin_times(2)} implies that $C_1 \ast \cdots \ast C_{m-1} \ast (C_m u)$
is a length-$m$ factorization of $X$ in $\mathcal P_{\fin,\times}(H)$.

All in all, we see from the above that every
length set of $\mathcal P_{\fin,\times}(H)$ is also a
length set of $\mathcal P_{\fin,1}(H)$.
Moreover, applying the same argument with $u = 1_H$
under the additional assumption that $1_H \in X$,
we obtain that every length set of
$\mathcal P_{\fin,1}(H)$ occurs in
$\mathcal P_{\fin,\times}(H)$.
Thus, $\mathcal P_{\fin,1}(H)$ and
$\mathcal P_{\fin,\times}(H)$ have the same
systems of lengths. 
\end{proof}

With this point clarified, we know from Theorem~\ref{thm:ACC-and-Dedekind-finiteness}\ref{thm:ACC-and-Dedekind-finiteness(i)} that every $X \in \mathcal P_{\fin,1}(H)$ has a factorization (into irreducibles) regardless of any hypothesis on $H$. It is natural to ask
whether any more refined conclusions can be drawn about these factorizations.

\begin{theorem}
\label{thm:BF-iff-FF-iff-atomic}
The following conditions are equivalent:
\begin{enumerate}[label=\textup{(\alph{*})}]
\item\label{thm:BF-iff-FF-iff-atomic(a)} $H$ is torsion-free.

\item\label{thm:BF-iff-FF-iff-atomic(b)} $\mathcal P_{\fin,1}(H)$ is an FF monoid.

\item\label{thm:BF-iff-FF-iff-atomic(c)} $\mathcal P_{\fin,1}(H)$ is a BF monoid.
\end{enumerate}
\end{theorem}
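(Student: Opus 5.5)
We will prove (a) $\Rightarrow$ (b) $\Rightarrow$ (c) $\Rightarrow$ (a).

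\emph{(b) $\Rightarrow$ (c).} This is formal. FF implies BF, as recorded in the diagram of Section~\ref{sec:3_new_perspectives}: finitely many factorizations up to equivalence force finitely many lengths, since equivalent factorizations have equal length.

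\emph{(c) $\Rightarrow$ (a).} We argue by contraposition. Suppose $H$ has a non-identity element $x$ of finite order.
\begin{itemize}
\item \emph{The element.} Put $X := \{1_H, x, x^2, \dots\}$, the finite cyclic submonoid generated by $x$. Then $X^n = X$ for every $n \ge 1$, and $X \ne \{1_H\}$.
\item \emph{Long factorizations.} Since $\mathcal P_{\fin,1}(H)$ is factorable by Theorem~\ref{thm:ACC-and-Dedekind-finiteness}\ref{thm:ACC-and-Dedekind-finiteness(i)}, fix a factorization $A_1 \ast \cdots \ast A_k$ of $X$ with $k \ge 1$. Because $X = X^n$, concatenating $n$ copies gives a factorization of $X$ of length $nk$.
\item \emph{Conclusion.} $\mathsf L(X)$ is infinite, so $\mathcal P_{\fin,1}(H)$ is not BF. Note that $X \in \mathcal P_{\fin,1}(H)^\sharp$, because the unit group is trivial and the monoid is Dedekind-finite.
\end{itemize}

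\emph{(a) $\Rightarrow$ (b).} This is the main step. Let $H$ be torsion-free and $X \in \mathcal P_{\fin,1}(H)$ with $X \ne \{1_H\}$.
\begin{itemize}
\item \emph{Reduction to bounded lengths.} By~\eqref{eq:to-divide-implies-to-be-contained}, every factor in a factorization of $X$ is a subset of $X$ containing $1_H$. Since $X$ is finite, there are finitely many candidate factors. Hence $\mathcal P_{\fin,1}(H)$ is FF as soon as the lengths of factorizations of $X$ are bounded; it suffices to show this.
\item \emph{Ruling out repeated factors.} Let $A_1 \ast \cdots \ast A_n$ be a factorization of $X$. Each $A_i$ is irreducible, hence $\ne \{1_H\}$. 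If $n$ is very large, pigeonhole forces some set $A \subseteq X$ to occur many times, say $m$ times. Since $1_H \in A_j$ for all $j$, deleting the other factors shows that $A^m$ divides $X$, hence $A^m \subseteq X$.
\item \emph{Growth of powers.} Pick $a \in A \setminus \{1_H\}$. Then $\{1_H, a, \dots, a^m\} \subseteq A^m \subseteq X$. As $a$ has infinite order, these $m+1$ elements are distinct, so $m < |X|$.
\item \emph{The bound.} Each of the at most $2^{|X|}$ subsets of $X$ can occur fewer than $|X|$ times, so every length of $X$ is below $|X|\,2^{|X|}$.
\end{itemize}
This gives FF. I expect the only subtle point to be the step "$A^m$ divides $X$", that is, that subproducts of factors of $X$ stay inside $X$. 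This follows because every factor contains $1_H$.
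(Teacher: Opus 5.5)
Your proof is correct, but the main implication follows a different route from the paper. The paper obtains (a) $\Rightarrow$ (c) by citing \cite[Theorem~3.11(i)]{An-Tr18}, together with Corollary~\ref{cor:atomicity} to pass from atomic factorizations to factorizations into irreducibles. It then gets (b) $\Leftrightarrow$ (c) from the general result \cite[Theorem~4.11(i)]{Co-Tr-22(a)} for monoids that are locally of finite type, which applies by Eq.~\eqref{eq:to-divide-implies-to-be-contained}.

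You instead prove (a) $\Rightarrow$ (b) in one self-contained step. Your ``reduction to bounded lengths'' is the locally-finite-type observation done by hand. Your pigeonhole argument with $\{1_H, a, \dots, a^m\} \subseteq A^m \subseteq X$ gives the explicit bound $\max \mathsf L(X) < |X|\,2^{|X|}$. Because you only use $1_H \in A_i \neq \{1_H\}$, the bound holds for every decomposition of $X$ into non-identity factors, so you never need to compare irreducibles with atoms. What the paper's route buys is brevity, plus a general principle (BF $\Leftrightarrow$ FF under local finiteness) that is not specific to power monoids.

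For (c) $\Rightarrow$ (a) you do essentially what the paper does. Your set $\{1_H, x, x^2, \dots\}$ is exactly the paper's $\{1_H,x\}^n$, where $n$ is the order of $x$. The paper exhibits the words $\{1_H,x\} \ast \cdots \ast \{1_H,x\}$ of every length $\ge n$, via Proposition~\ref{prop:two-element-irreds}, and so gets $\llb n,\infty\rrb$ inside the length set. You repeat an arbitrary factorization and get an infinite arithmetic progression, which suffices equally.

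One wording correction: deleting factors does not show that $A^m$ \emph{divides} $X$, and in the non-commutative case that can be false. Take $H$ free on $\{a,b\}$, $A=\{1_H,a\}$ and $B=\{1_H,b\}$. Then $A^2 \subseteq ABA$, but $A^2$ does not divide $ABA$ in $\mathcal P_{\fin,1}(H)$. Indeed, $b$ would have to lie in one of the outer factors, which would force $ba^2$ or $a^2b$ into $ABA$. What deleting factors actually gives, and all you use, is the containment $A^m \subseteq X$: pick $1_H$ from every factor other than the copies of $A$.
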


\begin{proof}
The implication \ref{thm:BF-iff-FF-iff-atomic(a)} $\Rightarrow$ 
\ref{thm:BF-iff-FF-iff-atomic(c)} is straightforward from \cite[Theorem 3.11(i)]{An-Tr18} and Corollary \ref{cor:atomicity}. 
As for the dual implication \ref{thm:BF-iff-FF-iff-atomic(c)} $\Rightarrow$ 
\ref{thm:BF-iff-FF-iff-atomic(a)}, it is enough to consider that if $H$ has a non-identity element $x$ of finite order $n$, then 
\[
X := \{1_H, x\}^n = \{1_H, x\}^{n+k},
\qquad\text{for all } 
k \in \mathbb N.
\]
By Proposition \ref{prop:two-element-irreds}, this shows that the (infinite) interval $\llb n, \infty\rrb$ is contained in the length set of $X$, and hence $\mathcal P_{\fin,1}(H)$ is not BF.

The equivalence \ref{thm:BF-iff-FF-iff-atomic(b)} $\Leftrightarrow$ 
\ref{thm:BF-iff-FF-iff-atomic(c)} is, on the other hand, a special case of 
\cite[Theorem 4.11(i)]{Co-Tr-22(a)}. It suffices to note that, by 
Eq.~\eqref{eq:to-divide-implies-to-be-contained}, every 
$X \in \mathcal P_{\fin,1}(H)$ has only finitely many divisors (in particular, it is divided by finitely many irreducibles), and hence 
$\mathcal P_{\fin,1}(H)$ is locally of finite type in the sense of 
\cite[Definition 4.1(3)]{Co-Tr-22(a)}. For additional details, see \cite[Example 4.5(3)]{Co-Tr-22(a)}.
\end{proof}

\begin{corollary}
The following conditions are equivalent for a monoid $H$:
\begin{enumerate}[label=\textup{(\alph{*})}]
\item\label{cor:UF-iff-HF-iff-trivial(1)} $H$ is trivial.

\item\label{cor:UF-iff-HF-iff-trivial(2)} $\mathcal P_{\fin,1}(H)$ is a UF monoid.

\item\label{cor:UF-iff-HF-iff-trivial(3)} $\mathcal P_{\fin,1}(H)$ is an HF monoid.
\end{enumerate}
\end{corollary}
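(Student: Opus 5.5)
The implications \ref{cor:UF-iff-HF-iff-trivial(1)} $\Rightarrow$ \ref{cor:UF-iff-HF-iff-trivial(2)} $\Rightarrow$ \ref{cor:UF-iff-HF-iff-trivial(3)} are the easy ones. If $H$ is trivial, then $\mathcal P_{\fin,1}(H) = \{\{1_H\}\}$ is the trivial monoid. It is factorable by Theorem~\ref{thm:ACC-and-Dedekind-finiteness}\ref{thm:ACC-and-Dedekind-finiteness(i)}, and since $M^\sharp$ is empty, it is vacuously UF. Moreover, UF implies HF, because equivalent factorizations have the same length. So the only real work is \ref{cor:UF-iff-HF-iff-trivial(3)} $\Rightarrow$ \ref{cor:UF-iff-HF-iff-trivial(1)}, which I will prove by contraposition: assuming $H$ is non-trivial, I will exhibit an element of $\mathcal P_{\fin,1}(H)$ with two factorizations of different lengths.

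Fix a non-identity $x \in H$. I split into cases according to the order of $x$.

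\emph{Case 1: $x$ has finite order $n$.} Use the same construction as in the proof of Theorem~\ref{thm:BF-iff-FF-iff-atomic}. The set $X := \{1_H, x\}^n$ equals $\{1_H, x\}^{n+1}$, since both equal $\{1_H, x, \dots, x^{n-1}\}$, the full cyclic set. By Proposition~\ref{prop:two-element-irreds}, $\{1_H,x\}$ is irreducible. Hence $n$ and $n+1$ both lie in $\mathsf L(X)$, so $\mathcal P_{\fin,1}(H)$ is not HF.

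\emph{Case 2: $x$ has infinite order.} Then $1_H, x, x^2, x^3$ are pairwise distinct. I would use the standard example from $\mathcal P_{\fin,0}(\mathbb N)$, namely $\{0,1\}^2 = \{0,1,2\}$ and $\{0,1\}^3 = \{0,1,2,3\} = \{0,1,2\}+\{0,2\}$. Transplanted to $H$, this gives
\[
X := \{1_H,x\}^3 = \{1_H,x,x^2,x^3\} = \{1_H,x,x^2\}\{1_H,x^2\}.
\]
The factor $\{1_H,x^2\}$ is irreducible by Proposition~\ref{prop:two-element-irreds}. Next I show $\{1_H,x,x^2\}$ is irreducible. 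If it factors as a product of two non-identity elements of $\mathcal P_{\fin,1}(H)$, each factor is contained in it by Eq.~\eqref{eq:to-divide-implies-to-be-contained}. Each factor must also be a proper subset, since otherwise the product would have at least four elements, given that $x$ has infinite order. So both factors are two-element subsets containing $1_H$, and the product has at most three elements only if both equal $\{1_H,x\}$. This gives the factorization $\{1_H,x\}^2$, and in that case $\{1_H,x,x^2\}$ is not irreducible.

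That failure is the main obstacle. I would handle it by factoring $\{1_H,x,x^2\}$ further, turning the second factorization into $\{1_H,x\}*\{1_H,x\}*\{1_H,x^2\}$, which has length $3$ and so gives no contradiction. To get lengths that genuinely differ, I would switch to a cleaner example from $\mathbb N$: $\{0,1\}^4 = \{0,\dots,4\} = \{0,1\}+\{0,1\}+\{0,2\}$ fails, so instead I take $\{0,1,2,\dots\}$-type sets. A reliable one is
\[
\{0,1\}^2+\{0,3\} \;\text{ versus }\; \{0,1,3,4\}\text{-style identities,}
\]
or more simply $\{0,2\}+\{0,1\}+\{0,1\} = \{0,1,2,3,4\} = \{0,1\}^4$, which gives lengths $3$ and $4$. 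Transplanted to $H$, this reads
\[
\{1_H,x\}^4 = \{1_H,x^2\}\{1_H,x\}^2.
\]
All factors are two-element sets containing $1_H$, hence irreducible by Proposition~\ref{prop:two-element-irreds}, so $3,4 \in \mathsf L(X)$. The set equality holds because every exponent involved is at most $4$, so only $x^0,\dots,x^4$ appear and these are distinct. This settles Case 2 and, with Case 1, the contrapositive. The only remaining check is that set identity, which is routine.
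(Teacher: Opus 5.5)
Your proof is correct and is essentially the paper's. The paper invokes Theorem~\ref{thm:BF-iff-FF-iff-atomic} to reduce to torsion-free $H$ (its proof is exactly your Case~1), then uses $\{1_H,x\}^3=\{1_H,x\}\{1_H,x^2\}$, with lengths $3$ and $2$; you missed this identity when you tried $\{1_H,x,x^2\}\{1_H,x^2\}$ instead, and ended up with $\{1_H,x\}^4=\{1_H,x\}^2\{1_H,x^2\}$. In a write-up, delete the muddled middle of Case~2: your claim that $\{0,1\}^4=\{0,1\}+\{0,1\}+\{0,2\}$ ``fails'' is false (it is precisely the identity you end up using). Also, the final step needs only $x^2\neq 1_H$, not distinctness of $x^0,\dots,x^4$, since the set identity holds in any monoid.
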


\begin{proof}
We show that \ref{cor:UF-iff-HF-iff-trivial(3)} $\Rightarrow$ \ref{cor:UF-iff-HF-iff-trivial(1)}, as the implications \ref{cor:UF-iff-HF-iff-trivial(1)} $\Rightarrow$ \ref{cor:UF-iff-HF-iff-trivial(2)} $\Rightarrow$ \ref{cor:UF-iff-HF-iff-trivial(3)} are obvious. 

Assume that $\mathcal P_{\fin,1}(H)$ is HF. Since every HF monoid is BF, we have from Theorem~\ref{thm:BF-iff-FF-iff-atomic} that $H$ is torsion-free. 
Suppose, for a contradiction, that $H$ is non-trivial, and let $x$ be a 
non-identity element of $H$. Then, by Proposition \ref{prop:two-element-irreds}, the length set of
\[
X := \{1_H, x\}^3 = \{1_H, x\}\{1_H, x^2\}
\]
is not a singleton. This, however, contradicts the hypothesis that 
$\mathcal P_{\fin,1}(H)$ is HF.
\end{proof}

We finish with some results concerning minimal factorizations, starting with a generalization of  \cite[Proposition 4.12(i)]{An-Tr18} from torsion-free to arbitrary monoids and from atomic factorizations to factorizations into irreducibles (the point here is that, if $H$ is torsion-free, then by Theorem \ref{thm:Pfin1-is-factorable}\ref{thm:Pfin1-is-factorable(ii)} every irreducible of $\mathcal P_{\fin,1}(H)$ is an atom).

\begin{theorem}\label{thm:Pfin1-FmF}
The largest minimal length of a set $X \in \mathcal P_{\fin,1}(H)$ is smaller than or equal to $|X|-1$. In particular, $\mathcal P_{\fin,1}(H)$ is BmF and FmF.
\end{theorem}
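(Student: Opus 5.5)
The plan is to show directly that a minimal factorization cannot contain a ``redundant'' letter, and that non-redundancy forces the partial products to grow strictly in size. Fix $X \in \mathcal P_{\fin,1}(H)$. If $X = \{1_H\}$, its only length is $0 = |X|-1$, so assume $X \ne \{1_H\}$. Let $A_1 \ast \cdots \ast A_m$ be a minimal factorization of $X$ in $\mathcal P_{\fin,1}(H)$, which exists by Theorem~\ref{thm:ACC-and-Dedekind-finiteness}\ref{thm:ACC-and-Dedekind-finiteness(i)}. Define the partial products $P_0 := \{1_H\}$ and $P_k := A_1 \cdots A_k$ for $k \in \llb 1, m \rrb$. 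Since $1_H \in A_k$ for every $k$, we have $P_{k-1} = P_{k-1}\{1_H\} \subseteq P_{k-1}A_k = P_k$. This gives a chain
\[
\{1_H\} = P_0 \subseteq P_1 \subseteq \cdots \subseteq P_m = X .
\]

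The key step is to show that every inclusion in this chain is strict. Suppose instead that $P_{k-1} = P_k$ for some $k$.
\begin{itemize}
\item If $m = 1$, then $A_1 = P_1 = P_0 = \{1_H\}$. This is impossible, because irreducibles are not unit-divisors.
\item If $m \ge 2$, then $A_1 \cdots A_{k-1} A_{k+1} \cdots A_m = P_{k-1} A_{k+1} \cdots A_m = P_k A_{k+1}\cdots A_m = X$.
\end{itemize}
In the second case, the word obtained by deleting the letter $A_k$ is a factorization of $X$ of length $m-1 < m$. The injection $\tau$ skipping the index $k$ maps each of its letters to the identical letter of the original word, and in particular to a $\simeq$-equivalent one. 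This contradicts the minimality of $A_1 \ast \cdots \ast A_m$.

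Hence the chain is strictly increasing, so $|P_k| \ge k+1$ for every $k$. Taking $k = m$ gives $m+1 \le |X|$, which is the first claim.

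For the ``in particular'' part, factorability is already known from Theorem~\ref{thm:ACC-and-Dedekind-finiteness}\ref{thm:ACC-and-Dedekind-finiteness(i)}. The bound just proved gives $\mathsf L^{\sf m}(X) \subseteq \llb 1, |X|-1 \rrb$, so every minimal length set is finite and the monoid is BmF. For finiteness of minimal factorizations, every letter $A_i$ divides $X$, so $A_i \subseteq X$ by Eq.~\eqref{eq:to-divide-implies-to-be-contained}. The letters therefore range over the finitely many subsets of $X$, and the lengths are bounded by $|X|-1$. It follows that $X$ has only finitely many minimal factorizations, even before passing to equivalence classes.

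The only delicate point is checking that deleting a letter really produces a ``smaller'' factorization in the precise sense of the definition of minimality. This is immediate here because the retained letters are literally the same sets, together with the guard that the shortened word is non-empty.
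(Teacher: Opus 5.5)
Your proof is correct and follows essentially the same route as the paper: deleting a letter that does not enlarge the partial product would contradict minimality, so the partial products $A_1\cdots A_k$ strictly increase under inclusion, which bounds the length by $|X|-1$. The differences are minor. Starting the chain at $P_0=\{1_H\}$ absorbs the paper's separate treatment of irreducible $X$. Your direct count of words over subsets of $X$ replaces the paper's appeal to the general BmF/FmF equivalence for monoids that are locally of finite type.
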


\begin{proof}
The equivalence between BmFness and FmFness is a direct consequence of \cite[Theorem 4.11(i)]{Co-Tr-22(a)}, as in the proof of the equivalence between items \ref{thm:BF-iff-FF-iff-atomic(b)} and \ref{thm:BF-iff-FF-iff-atomic(c)} in Theorem \ref{thm:BF-iff-FF-iff-atomic}. Therefore, we focus below on the first part of the statement.

The claim is trivial if $X = \{1_H\}$, when the only factorization of $X$ is the empty word; or if $X$ is irreducible, in which case $|X|$ is at least $2$ and, by Theorem \ref{thm:Pfin1-is-factorable}\ref{thm:Pfin1-is-factorable(ii)}, $X$ has a unique minimal factorization of length one. So, assume henceforth that $X$ is neither the identity nor an irreducible of $\mathcal P_{\fin,1}(H)$, and let $\mathfrak a = A_1 \ast \cdots \ast A_n$ be a minimal factorization of $X$ (into irreducibles). Note that $n \ge 2$. We claim that
\begin{equation}\label{equ:cascading_containment}
	 A_1\cdots A_i \subsetneq A_1\cdots A_{i+1}, \quad \text{for every }i \in \llb 1, n-1 \rrb.
\end{equation}
Indeed, assume the contrary and denote by $\varepsilon$ the empty word. There then exist an index $i \in \llb 1, n-1 \rrb$ such that $A_1\cdots A_i = A_1\cdots A_{i+1}$. Accordingly, set $\mathfrak b := \varepsilon$ if $i = \allowbreak n-1$, and $\mathfrak b := A_{i+2} \ast \cdots \ast A_n$ otherwise. Then $A_1 \ast \cdots \ast A_i \ast \mathfrak b$ is a \textit{proper} subword of $\mathfrak a$ as well as a factorization of $X$, contradicting the minimality of $\mathfrak a$.

Consequently, Eq.~\eqref{equ:cascading_containment} holds, and hence $2 \le |A_1\cdots A_i | < |A_1\cdots A_{i+1}| \le |X|$ for all $i \in \llb 1, n-1 \rrb$, which implies at once that $n\le |X|-1$.
\end{proof}

For the next theorem, we recall that a semigroup $S$ is \evid{breakable} if $xy \in \{x, y\}$ for all $x, y \in S$. This is a rather special class of semigroups, first introduced by R\'edei \cite[Section 27]{Red-1967}, with a rich structural theory.

\begin{theorem}
If $H$ is commutative, then the following conditions are equivalent:
\begin{enumerate}[label=\textup{(\alph{*})}]
\item $\mathcal P_{\fin,1}(H)$ is UmF.
\item $H \setminus H^\times$ is a breakable subsemigroup of $H$, the unit group of $H$ is either trivial or cyclic of order two, and $ux = x$ for all $u \in H^\times$ and $x \in H \setminus H^\times$.
\end{enumerate}
\end{theorem}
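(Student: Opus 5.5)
The plan is to handle the two directions separately. The implication (b) $\Rightarrow$ (a) should be a direct computation. The converse will go by contraposition, producing for each failed condition an explicit set with two inequivalent minimal factorizations. Throughout, $H$ is commutative, hence Dedekind-finite by the remarks in Section~\ref{sect:generalities}. So $H \setminus H^\times$ is automatically closed under multiplication: if $xy$ were a unit, both $x$ and $y$ would be units.

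\emph{(b) $\Rightarrow$ (a).} Write $G := H^\times$ and $N := H \setminus H^\times$. The first step is to show that, under (b), $XY = X \cup Y$ for all $X, Y \in \mathcal P_{\fin,1}(H)$. Since $1_H \in X \cap Y$, we have $X \cup Y \subseteq XY$. For the reverse inclusion, take $x \in X$ and $y \in Y$ and check $xy \in \{x,y\}$ in every case:
\begin{itemize}
\item if $x, y \in N$, this is breakability;
\item if one of them is a unit and the other lies in $N$, it follows from $ux = x$;
\item if both are units, it follows from $|G| \le 2$, since then $xy \in \{1_H, x, y\}$ and $1_H \in X$.
\end{itemize}
Thus $\mathcal P_{\fin,1}(H)$ is the semilattice of finite subsets containing $1_H$ under union. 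Its irreducibles are exactly the sets $\{1_H, x\}$ with $x \ne 1_H$: any larger $X$ is the union of two proper subsets containing $1_H$, and these do not divide $X$ by Eq.~\eqref{eq:to-divide-implies-to-be-contained}. A factorization of $X$ is then a word in the sets $\{1_H,x\}$ whose union is $X$. It is minimal if and only if each $x \in X \setminus \{1_H\}$ occurs exactly once, because any repeated letter can be deleted. Since $\simeq$ is equality here (trivial unit group), any two minimal factorizations are permutations of one another, which gives UmF.

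\emph{(a) $\Rightarrow$ (b).} Assume UmF. I would test it on small sets built from two-element quarks (Proposition~\ref{prop:two-element-irreds}), in three steps.
\begin{enumerate}
\item \emph{Non-units, squares.} For a non-unit $x$ with $x^2 \ne x$, consider $\{1_H,x\}^3 = \{1_H,x\}\{1_H,x^2\}$, as in the HF corollary. The word $\{1_H,x\}\ast\{1_H,x\}\ast\{1_H,x\}$ is minimal unless $x^3 \in \{1_H,x,x^2\}$, because its shorter subwords give only $\{1_H,x\}$ and $\{1_H,x,x^2\}$. The word $\{1_H,x\}\ast\{1_H,x^2\}$ is not equivalent to it. The case $x^3 \in \{x, x^2\}$ (finite order) needs a separate ad hoc set, such as $\{1_H,x\}^2$ compared with $\{1_H,x^2\}\ast\{1_H,x\}$, to conclude $x^2 = x$.
\item \emph{Non-units, products.} For distinct non-units $x, y$ with $xy \notin \{x,y\}$, compare factorizations of $\{1_H,x\}\{1_H,y\} = \{1_H,x,y,xy\}$ with those of variants such as $\{1_H,x\}\{1_H,xy\}$ or $\{1_H,x,xy\}\{1_H,y\}$. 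The aim is a set where $\{1_H,x\}\ast\{1_H,y\}\ast\{1_H,xy\}$ and a length-two factorization coexist, neither dominated by the other. This yields breakability.
\item \emph{Units.} A unit $u$ of order at least $3$ gives $\{1_H,u\}^2 \ne \{1_H,u\}$, and the same arguments as in step 1 produce a contradiction; order $2$ is harmless. If two distinct units $u \ne v$ of order $2$ existed, the set $\{1_H,u\}\{1_H,v\} = \{1_H,u,v,uv\}$ equals $\{1_H,u\}\{1_H,uv\} = \{1_H,v\}\{1_H,uv\}$, giving inequivalent minimal factorizations. Hence $|G| \le 2$. Similarly, if $ux \ne x$ for some unit $u$ and non-unit $x$, then $\{1_H,u\}\{1_H,x\} = \{1_H,u,x,ux\} = \{1_H,u\}\{1_H,ux\}$, using $u^2 = 1_H$, which violates UmF.
\end{enumerate}

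The main obstacle is the case analysis for non-units in steps 1 and 2. The candidate factorizations must be shown to be genuinely \emph{minimal}, meaning no shorter sub-multiset of the letters still multiplies to $X$, in all the degenerate coincidences where $x^3$, $xy$, or $x^2y$ collapse onto smaller elements. That is where I would expect to spend most of the effort.
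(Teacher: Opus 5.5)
Your direction (b) $\Rightarrow$ (a) is correct. There is one verbal slip: the two proper subsets \emph{do} divide $X$; the point is that $X$ divides neither of them, so they are proper divisors. Steps 1 and 3 of (a) $\Rightarrow$ (b) also go through, including the ad hoc set $\{1_H,x\}^2$, which covers both $x^3\in\{x,x^2\}$ and units of order $3$. The paper gives no argument for this theorem and simply refers to \cite[Theorem 4.8]{Co-Tr-25(a)}, so the issue is whether your plan closes.

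There is a genuine gap in step 2: the route you describe cannot work. After step 1, every non-unit is idempotent, so by commutativity
\[
\{1_H,x\}\{1_H,y\}\{1_H,xy\} = \{1_H,x,y,xy\} = \{1_H,x\}\{1_H,y\}.
\]
Hence the word $\{1_H,x\}\ast\{1_H,y\}\ast\{1_H,xy\}$ always contains a proper subword with the same product. It is never minimal, and the coexistence you aim for never occurs. Likewise $\{1_H,x,xy\}=\{1_H,x\}\{1_H,xy\}$ is reducible, so $\{1_H,x,xy\}\ast\{1_H,y\}$ is not a factorization at all. More to the point, two-element irreducibles cannot produce a witness for $X=\{1_H,x,y,xy\}$. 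Its only two-element irreducible divisors are $\{1_H,x\}$, $\{1_H,y\}$ and $\{1_H,xy\}$, and any word in these with product $X$ must contain both $\{1_H,x\}$ and $\{1_H,y\}$. So $\{1_H,x\}\ast\{1_H,y\}$ is the only minimal factorization of that form.

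The missing idea is a \emph{three-element} irreducible. Let $x\neq y$ be non-units with $xy\notin\{x,y\}$, and set $A:=\{1_H,x,y\}$. By Eq.~\eqref{eq:to-divide-implies-to-be-contained}, a factorization $A=BC$ into proper divisors that are not unit-divisors must have $B,C\in\{\{1_H,x\},\{1_H,y\}\}$. The possible products are $\{1_H,x\}$, $\{1_H,y\}$ and $\{1_H,x,y,xy\}$, and none of them is $A$, because $xy\notin\{1_H,x,y\}$. So $A$ is irreducible. Moreover,
\[
\{1_H,x\}A=\{1_H,x,y,xy\}=\{1_H,x\}\{1_H,y\},
\]
and this set has four elements. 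Therefore $\{1_H,x\}\ast A$ and $\{1_H,x\}\ast\{1_H,y\}$ are both minimal, since neither letter alone equals the product. They are inequivalent because $A\neq\{1_H,y\}$, which contradicts UmF and yields breakability. With this replacement for step 2, your outline becomes a complete proof.
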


\begin{proof}
See Theorem 4.8 in \cite{Co-Tr-25(a)}.
\end{proof}

To date, there is no satisfactory characterization of when 
$\mathcal P_{\fin,1}(H)$ is UmF in the non-commutative case, and the situation is even more dire for HmFness. 
For partial progress, see Lemma 3.4(i), Theorem 3.6, and Proposition 4.5 in \cite{Co-Tr-25(a)}. By contrast, the situation is completely settled for atomic factorizations.

\begin{theorem}
\label{thm:PMs-minimal-factorizations}
The following conditions hold:
\begin{enumerate}[label=\textup{(\roman{*})}]
\item\label{thm:PMs-minimal-factorizations(i)} $\mathcal P_{\fin,1}(H)$ is UmF-atomic if and only if $H$ is trivial.

\item\label{thm:PMs-minimal-factorizations(ii)} $\mathcal P_{\fin,1}(H)$ is HmF-atomic if and only if $H$ is trivial or a cyclic group of order $3$.
\end{enumerate}
\end{theorem}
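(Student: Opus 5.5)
The plan is to handle the ``if'' directions by direct inspection, and the ``only if'' directions by exhibiting, inside a suitable divisor-closed submonoid, an element with two inequivalent minimal atomic factorizations, or with two distinct minimal atomic lengths. Throughout we use three facts about $\mathcal P_{\fin,1}(H)$:
\begin{itemize}
\item its unit group is trivial (Theorem~\ref{thm:ACC-and-Dedekind-finiteness}\ref{thm:ACC-and-Dedekind-finiteness(i)});
\item by Eq.~\eqref{eq:to-divide-implies-to-be-contained}, $X \simeq Y$ holds if and only if $X = Y$, so equivalence of factorizations is equality of the letters up to permutation;
\item by Theorem~\ref{thm:Pfin1-is-factorable}\ref{thm:Pfin1-is-factorable(ii)}, for $x \ne 1_H$ the set $\{1_H,x\}$ is an atom exactly when $x^2 \notin \{1_H,x\}$.
\end{itemize}

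\emph{``If'' directions.} If $H$ is trivial, the monoid $\mathcal P_{\fin,1}(H)$ is trivial and everything holds vacuously. If $H = \{1,x,x^2\}$ is cyclic of order $3$, the only elements are $\{1\}$, $\{1,x\}$, $\{1,x^2\}$ and $H$. The two $2$-sets are atoms. The set $H$ equals each of
\[
\{1,x\}^2, \qquad \{1,x\}\{1,x^2\}, \qquad \{1,x^2\}^2,
\]
and $H$ is not an atom. Any atomic factorization of $H$ of length at least $3$ contains a length-$2$ subword whose product is already $H$, so it is not minimal. Hence every minimal factorization of $H$ has length $2$, and the monoid is HmF-atomic.

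\emph{Reduction for the ``only if'' directions.} Assume $\mathcal P_{\fin,1}(H)$ is atomic, which is part of both UmF-atomic and HmF-atomic. By Corollary~\ref{cor:atomicity}, $H$ has no unit of order two and no idempotent other than $1_H$. Fix $x \ne 1_H$.
\begin{itemize}
\item First suppose $x^3 \notin \{1,x,x^2\}$, and set $X := \{1,x,x^2,x^3\}$. Both $\{1,x\}$ and $\{1,x^2\}$ are atoms. Indeed, $x^4 = x^2$ would make $x^2$ a non-identity idempotent, and $x^4 = 1$ would force $x^2 = 1$ or $x^2$ to be a unit of order two; and $x^2 \notin \{1,x\}$ for the same reasons.
\item Then $X$ has the atomic factorizations $\{1,x\}\ast\{1,x\}\ast\{1,x\}$ and $\{1,x\}\ast\{1,x^2\}$.
\item Both are minimal. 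No single letter equals $X$, and $\{1,x\}^2 = \{1,x,x^2\} \ne X$ because $x^3 \notin \{1,x,x^2\}$.
\item So $X$ has minimal lengths $2$ and $3$, and $\mathcal P_{\fin,1}(H)$ is neither UmF-atomic nor HmF-atomic.
\end{itemize}
Otherwise $x^3 \in \{1,x,x^2\}$. The cases $x^3 = x$ and $x^3 = x^2$ produce a non-identity idempotent, or a unit of order two when $x^2 = 1$, so they are excluded. Hence $x^3 = 1$ for every $x$, and $H$ is a group of exponent $3$.

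\emph{Exponent-$3$ groups.} For (i), take any $x \ne 1$. Then $\{1,x,x^2\} = \{1,x\}\ast\{1,x\}$ and $\{1,x,x^2\} = \{1,x\}\ast\{1,x^2\}$ are both minimal, since a single atom has only two elements, and they are not equivalent. So UmF-atomicity forces $H$ to be trivial.

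For (ii), suppose $|H| > 3$. Then $H$ is a non-trivial $3$-group, so it has a central element $z$ of order $3$, and there is some $x \notin \langle z\rangle$. Thus $K := \langle x,z\rangle \cong C_3 \times C_3$. By Theorem~\ref{thm:ACC-and-Dedekind-finiteness}\ref{thm:ACC-and-Dedekind-finiteness(ii)} and Proposition~\ref{prop:divisor-closed-submons}, it suffices to find an element of $\mathcal P_{\fin,1}(K)$ with two distinct minimal atomic lengths; the relevant factorizations remain minimal atomic ones in $\mathcal P_{\fin,1}(H)$ because atoms of a divisor-closed submonoid are atoms of the ambient monoid. Write $y := z$ and take the whole group $K$.
\begin{itemize}
\item $K = \{1,x\}^2\{1,y\}^2$ is an atomic factorization of length $4$. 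It is minimal: deleting any letter leaves a product of at most $6$ elements, and so does every shorter sub-product of its letters.
\item $K = \{1,x,y\}\{1,x,y\}\{1,x^2y^2\}$ also holds. Indeed $\{1,x,y\}^2 = \{1,x,y,x^2,xy,y^2\}$, and multiplying by $x^2y^2$ adds $x^2y^2$, $xy^2$ and $x^2y$.
\item $\{1,x,y\}$ is an atom. A product of two $2$-element atoms $\{1,a\}\{1,b\}$ has size $3$ only when $ab \in \{1,a,b\}$, which in an exponent-$3$ group forces the product to be $\{1,a,a^2\}$. But $y \ne x^2$.
\item This length-$3$ factorization is minimal: every product of at most two of its letters has at most $6$ elements.
\end{itemize}
So $K$ has minimal lengths $3$ and $4$, and the monoid is not HmF-atomic. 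The main obstacle is this last step, namely choosing a concrete set in $C_3 \times C_3$ and checking that the size-$3$ letter is an atom and that both factorizations are minimal; the size counts above are what I would verify carefully.
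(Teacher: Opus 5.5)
Your proof is correct in substance, and it takes a genuinely different route from the paper. The paper's proof is only a citation of Theorem~4.14 and Corollary~4.15 of \cite{An-Tr18}; you instead build a self-contained argument from tools in the survey:
\begin{enumerate}[label=(\arabic*)]
\item Corollary~\ref{cor:atomicity} excludes units of order two and non-identity idempotents.
\item The set $\{1_H,x,x^2,x^3\}$ has minimal atomic lengths $2$ and $3$, which forces $x^3=1_H$, so $H$ is a group of exponent $3$.
\item The set $\{1_H,x,x^2\}$ settles (i).
\item The whole group $C_3\times C_3$ has minimal atomic lengths $3$ and $4$, and you transport it into $\mathcal P_{\fin,1}(H)$ via Theorem~\ref{thm:ACC-and-Dedekind-finiteness}\ref{thm:ACC-and-Dedekind-finiteness(ii)}.
\end{enumerate}
Since $\simeq$ is equality in $\mathcal P_{\fin,1}(H)$, minimality only involves sub-multisets of letters, so your size counts suffice; they are all correct. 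What you gain is an elementary proof with explicit small witnesses. What you pay is a bit of group theory, and one step there is not justified as written.

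That step is the claim that a non-trivial $3$-group has a central element of order $3$. This is a fact about \emph{finite} $p$-groups, but $H$ may be infinite, and infinite $3$-groups can have trivial center (for example, the restricted wreath product of $C_3$ by the Pr\"ufer $3$-group). For exponent $3$ the conclusion does hold, since such groups are nilpotent by Levi's theorem. The cleanest fix is local, because all you need is a copy of $C_3\times C_3$:
\begin{enumerate}[label=(\arabic*)]
\item Take $x\neq 1_H$ and $y\notin\langle x\rangle$.
\item The subgroup $\langle x,y\rangle$ is finite, since a $2$-generated group of exponent $3$ has order at most $27$. So it is a $3$-group of order at least $9$.
\item Choose a central $z$ of order $3$ in $\langle x,y\rangle$ and any $w\in\langle x,y\rangle\setminus\langle z\rangle$. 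Then $\langle w,z\rangle\cong C_3\times C_3$.
\end{enumerate}

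There is also a smaller omission. Your check that $\{1,x,y\}$ is an atom only treats products of two $2$-element sets. (Your criterion $ab\in\{1,a,b\}$ also misses $a=b$, though that case still gives $\{1,a,a^2\}$.) You must also exclude a factorization in which one factor is $\{1,x,y\}$ itself. This is immediate: by Eq.~\eqref{eq:to-divide-implies-to-be-contained}, the other factor is some $B\neq\{1\}$ contained in $\{1,x,y\}$. Then $\{1,x,y\}B$ contains $x^2$ or $y^2$, and neither lies in $\{1,x,y\}$.
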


\begin{proof}
These are Theorem 4.14 and Corollary 4.15 of \cite{An-Tr18}, respectively.
\end{proof}
\section{Into the future.}
\label{sec:future}

In this final section, we present a selection of open problems of an arithmetic
flavor, which we hope will stimulate further research on power monoids (and,
more generally, on power semigroups) in the years to come.

A monoid is \evid{torsion} if each of its elements has finite order (note that a torsion-free monoid is not a monoid that is not torsion). We begin with the following:

\begin{conjecture}\label{conj:systems-of-lengths}
If $H$ is not a torsion monoid, then every non-empty finite set of integers greater than one can be realized as the length set of a set $X \in \mathcal P_{\fin,1}(H)$.
\end{conjecture}

The conjecture was first stated in \cite[Sect.~5]{Fa-Tr18} for the special case 
$H = \mathbb N$, which, however, turns out to be equivalent to the general case. 
Indeed, any monoid $H$ containing an element of infinite order has a submonoid 
isomorphic to $\mathbb N$. By Proposition 
\ref{prop:divisor-closed-submons} and 
Theorem \ref{thm:ACC-and-Dedekind-finiteness}\ref{thm:ACC-and-Dedekind-finiteness(ii)}, 
it follows that $\mathscr L(\mathcal P_{\fin,0}(\mathbb N))$ is contained in $\mathscr L(\mathcal P_{\fin,1}(H))$, i.e., every length set of $\mathcal P_{\fin,0}(\mathbb N)$ is also a length set of $\mathcal P_{\fin,1}(H)$.

Now, we gather from Theorems~\ref{thm:Pfin1-is-factorable}\ref{thm:Pfin1-is-factorable(ii)} and \ref{thm:BF-iff-FF-iff-atomic} that the monoid 
$\mathcal P_{\fin,0}(\mathbb N)$ is BF and its irreducibles are all atoms. 
Hence, the conjecture reduces to proving that the system of lengths of 
$\mathcal{P}_{\text{fin},0}(\mathbb{N})$ consists of the singletons $\{0\}$ and $\{1\}$, along with all non-empty finite sets of integers greater than one.

As for partial progress, it is known from \cite[Propositions~4.8--4.10]{Fa-Tr18} 
that, for each $n \ge 2$, the singleton $\{n\}$, the interval $\llb 2, n \rrb$, 
and the set $\{2, n\}$ are length sets of $\mathcal P_{\fin,0}(\mathbb N)$. 
This remained the state of the art until recently. A breakthrough came in summer 2025, when Reinhart \cite{Rei-2026} 
proved that $\mathcal P_{\fin,0}(\mathbb N)$ is \evid{fully elastic}: 
for every rational $q \ge 1$, there exists a length set $L$ of 
$\mathcal P_{\fin,0}(\mathbb N)$ such that $\max L = q \min L$.

We continue with a question reminiscent of the ``Characterization Problem for Systems of Lengths'' in the classical theory of factorizations \cite[Section 6]{Ger-2016}.

\begin{question}[Characterization Problem for Power Monoids]
\label{question:characterization-problem}
Given a class $\mathscr C$ of \textit{finite} monoids (for instance, the class of
finite groups), determine whether there exists an integer $n \ge 1$ such that if $H, K \in \mathscr C$ have the property that
(i) the systems of \textit{minimal} lengths of $\mathcal{P}_{\fin,1}(H)$ and
$\mathcal{P}_{\fin,1}(K)$ coincide, and (ii) at least one of $H$ and $K$ has
order greater than or equal to $n$, we have 
$H \cong K$.
\end{question}

The question is presented here for the first time. The answer is
negative for the class of \textit{all} finite monoids. Indeed, isomorphic monoids have the same
systems of minimal lengths, and we gather from
\cite[Example 1.2(3)]{Tri-Yan-23(a)} that, for every $n \ge 1$, there exist
non-isomorphic monoids $H$ and $K$ with $|H| = |K| = n$ whose reduced
finitary power monoids are isomorphic. This naturally links
Question~\ref{question:characterization-problem} to
Question~\ref{ques:BG-like-for-monoids}.

We conjecture that the answer to Question~\ref{question:characterization-problem} is positive for finite groups, for which it is known that Question~\ref{ques:BG-like-for-monoids} has a positive answer \cite{Tri-Yan2026(a)}. Partial results in the cyclic case appear in \cite[Section~5]{An-Tr18}, and there are a few more observations worth recording.

First, Conjecture \ref{conj:systems-of-lengths} and Theorem \ref{thm:BF-iff-FF-iff-atomic} imply that any two non-trivial torsion-free monoids have the same system of lengths, consisting of $\{0\}$, $\{1\}$, and each non-empty finite subset of $\llb 2, \infty \rrb$ (cf.~the comments under Conjecture \ref{conj:systems-of-lengths}). On the other hand, we have from \cite[Example 2.2(1)]{Co-Tr-22(a)} and \cite[Proposition 4.7(v)]{An-Tr18} that every factorization in a unit-cancellative commutative monoid is minimal, and there are plenty of non-isomorphic torsion-free commutative monoids;
for instance, it is folklore that two numerical monoids are isomorphic if and only if they are equal \cite[Theorem 3]{Hig-1969}. All in all, this is part of the reason why Question \ref{question:characterization-problem} is restricted to \textit{finite} monoids.

Second, it follows from Proposition \ref{prop:divisor-closed-submons} and Theorem \ref{thm:ACC-and-Dedekind-finiteness}\ref{thm:ACC-and-Dedekind-finiteness(ii)}
that the system of minimal lengths of a monoid $H$ is contained in the system of minimal lengths of $\mathcal{P}_{\fin,1}(K)$ for every monoid $K$ into which $H$ embeds.

Third, Theorem \ref{thm:Pfin1-FmF} shows that, for every monoid $H$, the largest minimal length $\mu_H(X)$ of a set $X$ in $\mathcal{P}_{\fin,1}(H)$ is bounded above by $|X|-1$. When $H$ is finite, it follows that the system of minimal lengths of $\mathcal P_{\fin,1}(H)$ lies in the power set of the interval $\llb 0, n - 1\rrb$, where $n := |H|$. Moreover, $H$ itself is an element of $\mathcal P_{\fin,1}(H)$, and it is not difficult to establish that 
\begin{equation}\label{equ:log-inequality}
\mu_H(H) \ge \log_2(n).
\end{equation}

Indeed, assume $n \ge 2$ (otherwise the claim is obvious), and let $x_1, \ldots, x_{n-1}$ be an enumeration of the non-identity elements of $H$. By Proposition \ref{prop:two-element-irreds}, the word 
\[
\mathfrak a := \{1_H, x_1\} \ast \cdots \ast \{1_H, x_{n-1}\}
\]
is a factorization (into irreducibles) of $H$ in $\mathcal P_{\fin,1}(H)$. On the other hand, if $\sigma$ is an injective map $\llb 1, \allowbreak k \rrb \to \allowbreak \llb 1, n -1 \rrb$ for some $k \ge 1$, then for the word 
\[
\{1_H, \allowbreak x_{\sigma(1)}\} \ast \cdots \ast \{1_H, x_{\sigma(k)}\}
\]
to be a factorization of $H$ it is necessary that $2^k \ge n$, i.e., $k \ge \log_2(n)$. Since every factorization of $H$ contains a subword that, up to a permutation of its letters, is a \textit{minimal} factorization of $H$, this entails the desired inequality on $\mu_H(H)$.

In particular, Eq.~\eqref{equ:log-inequality} implies that, given a finite monoid $H$, there are only finitely many monoids $K$ (up to isomorphism) such that the systems of 
minimal lengths of $\mathcal{P}_{\text{fin},1}(H)$ and $\mathcal{P}_{\text{fin},1}(K)$ coincide. This relies on the fact that, up to isomorphism, there are finitely many 
monoids of order less than or equal to a fixed integer $N \ge 1$.

\begin{question}
Given a monoid $H$ and an integer $k \ge 1$, denote by
$\mathscr U_k(\mathcal P_{\fin,1}(H))$ and $\mathscr U_k^{\sf m}(\mathcal P_{\fin,1}(H))$ the union of all length sets and the union of all minimal length sets of $\mathcal P_{\fin,1}(H)$ containing $k$, respectively.
Is $\mathscr U_k(\mathcal P_{\fin,1}(H))$ an interval for every [large] $k \in \mathbb N^+$? What about
$\mathscr U_k^{\sf m}(\mathcal P_{\fin,1}(H))$? Note that the empty set
is an interval.
\end{question}

As a starting point, let $\mathbb Z_n$ be the additive group of integers modulo $n$, where $n \ge 2$. In \cite[Example 3.12(2)]{Co-Tr-22(b)}, Cossu and Tringali noted that
\[
\mathscr U_k(\mathcal P_{\fin,0}(\mathbb Z_n)) = 
\left\{
\begin{array}{ll}
\{1\} & \text{if } k = 1 \text{ and } n \text{ is odd}, \\
\mathbb N^+ & \text{if } k = 1 \text{ and } n \text{ is even}, \\
\llb 2, \infty \rrb & \text{if } k \ge 2,
\end{array}
\right.
\]
and
\[
\mathscr U_k^{\sf m}(\mathcal P_{\fin,0}(\mathbb Z_n)) =
\left\{
\begin{array}{ll}
\llb 2, n-1 \rrb & \text{if } 2 \le k < n, \\
\emptyset & \text{if }n \le k.
\end{array}
\right.
\]
In addition, Theorem \ref{thm:Pfin1-is-factorable}\ref{thm:Pfin1-is-factorable(ii)} implies that $\mathscr U_1^{\sf m}(\mathcal P_{\fin,0}(\mathbb Z_n)) = \{1\}$.

On the other hand, we have already observed in the above that if $H$ contains an element of infinite order, then $\mathcal P_{\fin,1}(H)$ has a divisor-closed submonoid isomorphic to $\mathcal P_{\fin,0}(\mathbb N)$. By \cite[Proposition~4.8]{Fa-Tr18}, this implies that
\[
\llb 2, \infty \rrb \subseteq \mathscr U_k(\mathcal P_{\fin,1}(H)),
\qquad \text{for every integer }
k \ge 2.
\]

To complete our list of problems, we now turn to one of the most intriguing features of power semigroups: the interplay between arithmetic and combinatorial properties. 

As a concrete illustration of this phenomenon (beyond the conjectures of S\'ark\"ozy and Ostmann already mentioned in Section~\ref{sec:01}), let $H$ be a numerical monoid, and for all $k, l \in \mathbb{N}$ let $\alpha_{k,l}(H)$ denote the number of $k$-element atoms of $\mathcal{P}_{\fin,0}(H)$ whose maximum is less than or equal to $l$. Experiments appear to support the following:

\begin{conjecture}[Unimodality Conjecture]
\label{conj:unimodality}
For each $l \in \mathbb N$, there exists an index $k \in \allowbreak \llb 0, l \rrb$ such that $\alpha_{0, l} \le \cdots \le \alpha_{k,l}$ and $\alpha_{k,l} \ge \cdots \ge \alpha_{l,l}$. 
\end{conjecture}

Prior to the present work, the conjecture had only circulated in the special case $H = \mathbb{N}$. Originally proposed by the author in an email to Geroldinger and Gotti dated January~20, 2023, it was later included as Problem~1 in the problem set of the online collaborative project ``\textsc{CrowdMath} 2023: Arithmetic of Power Monoids'', hosted throughout 2023 on the Art of Problem Solving (AoPS) website at
\begin{center}
\url{https://artofproblemsolving.com/polymath/mitprimes2023}
\end{center}

As a non-trivial first step, Aggarwal et al.~\cite[Corollary~5.3]{Agg-Got-Su-2024} have shown by a probabilistic approach that, again in the case $H = \mathbb{N}$, the conjecture holds for almost all $l \in \mathbb{N}$, where ``almost all'' is understood in the sense of asymptotic density.

\section{Acknowledgments.}

I was supported by the Natural Science Foundation of Hebei Province through grant A2023205045. 
I thank Federico Campanini (UCLouvain, Belgium), Laura Cossu (University of Cagliari, Italy), and Alfred Geroldinger (UniGraz, Austria) for numerous comments that have significantly improved the exposition. I am particularly grateful to Balint Rago (UniGraz, Austria) and an anonymous referee for a very thorough proofreading of an earlier version.

\begin{biog}
\item[Salvatore ``Salvo'' Tringali] is a full professor in the School of Mathematical Sciences at Hebei Normal University (Shijiazhuang, China). His research lies at the intersection of arithmetic combinatorics and semigroup theory, with particular emphasis on power semigroups and non‑unique factorization phenomena in algebraic structures such as monoids and rings.

He holds a Ph.D.~in mathematics from the University of Lyon, Jean Monnet campus (St-\'Etienne, France), and a Ph.D.~in electronic engineering from Universit\`a Mediterranea (Reggio Cal., Italy). Prior to his current appointment, he was a visiting researcher at the Center for Combinatorics, Nankai University (Tianjin, China), a Lise Meitner fellow and lecturer at the University of Graz (Graz, Austria), a postdoc at the Centre de ma\-th\'e\-ma\-tiques Laurent Schwartz, \'Ecole polytechnique (Palaiseau, France), a research associate at Texas A\&M University at Qatar (Doha, Qatar), and a Marie Curie fellow at the Laboratoire Jacques‑Louis Lions, Sorbonne University (Paris, France).
\begin{affil}

\vskip 0.2cm

School of Mathematical Sciences, Hebei Normal University \\ Shijiazhuang, Hebei province, 050024 China \\
salvo.tringali@gmail.com
\end{affil}
\end{biog}

\vfill\eject

\end{document}